\documentclass{article}

\usepackage[margin=2.5cm]{geometry}

\usepackage{lipsum}
\usepackage{amsfonts}
\usepackage{graphicx}
\usepackage{epstopdf}
\usepackage{algorithmic}
\usepackage{amsmath}
\usepackage{amssymb}
\usepackage{amsthm}
\usepackage{mathrsfs}
\usepackage{dsfont}
\usepackage[section]{algorithm}
\usepackage[labelfont=up]{subcaption}
\usepackage{afterpage}
\usepackage[title]{appendix}

\usepackage[colorlinks=true,hyperindex=true]{hyperref}
\hypersetup{
    colorlinks,
    citecolor=black,
    linkcolor=black,
    urlcolor=black,
    pdfnewwindow=true,
    pdfstartview={FitH}
}

\DeclareMathOperator{\Tr}{Tr}
\renewcommand\d{\mathrm{d}}
\newcommand{\bR}{\ensuremath{\mathbb R}}
\newcommand\sA{\mathscr{A}}
\newcommand\cA{\mathcal{A}}
\newcommand\cL{\mathcal{L}}
\newcommand\1{\mathds{1}}
\newcommand\E{\mathbb{E}}
\def\epsilon{\varepsilon}
\newcommand\cP{\mathcal{P}}
\newcommand\wP{\widetilde{P}}
\newcommand\hP{\widehat{P}}
\newcommand\wLambda{\widetilde{\Lambda}}
\newcommand\hLambda{\widehat{\Lambda}}
\newcommand\hlambda{\widehat{\lambda}}
\newcommand\hmu{\widehat{\mu}}
\newcommand{\pair}[2]{(#1,#2)}

\newtheorem{theorem}{Theorem}[section]
\newtheorem{assumption}{Assumption}[section]
\newtheorem{proposition}[theorem]{Proposition}
\newtheorem{example}{Example}[section]
\newtheorem{remark}{Remark}[section]

\hyphenation{irre-ver-si-bi-li-ty}

\numberwithin{equation}{section}

\title{Computing Large Deviation Rate Functions of Entropy Production for Diffusion Processes by an Interacting Particle Method}
\author{Zhizhang Wu
\!\footnote{Department of Mathematics, The University of Hong Kong, Pokfulam Road, Hong Kong SAR, China. wzz14@tsinghua.org.cn}
\ , 
Renaud Raqu\'epas
\!\footnote{Courant Institute, New York University, New York, NY 10012, United States. rr4374@nyu.edu}
\ , 
{Jack Xin}
\!\footnote{Department of Mathematics, University of California at Irvine, Irvine, CA 92697, United States. jack.xin@uci.edu}
\ , and 
Zhiwen Zhang
\!\footnote{Corresponding author. Department of Mathematics, The University of Hong Kong, Pokfulam Road, Hong Kong SAR, China. Materials Innovation Institute for Life Sciences and Energy (MILES), HKU-SIRI, Shenzhen, China. zhangzw@hku.hk}}

\date{}

\begin{document}

\maketitle

\begin{abstract}
We develop an interacting particle method (IPM) for computing the large deviation rate function of entropy production for diffusion processes, with emphasis on the vanishing-noise limit and high dimensions. The crucial ingredient to obtain the rate function is the computation of the principal eigenvalue $\lambda$ of elliptic, non-self-adjoint operators. We show that this principal eigenvalue can be approximated in terms of the spectral radius of a discretized evolution operator, which is obtained from an operator splitting scheme and an Euler--Maruyama scheme with a small time step size. We also show that this spectral radius can be accessed through a large number of iterations of this discretized semigroup, which is suitable for computation using the IPM. The IPM applies naturally to problems in unbounded domains and scales easily to high dimensions. We show numerical examples of dimensions up to 16, and the results show that our numerical approximation of $\lambda$ converges to the analytical vanishing-noise limit within visual tolerance with a fixed number of particles and a fixed time step size. It is numerically shown that the IPM can adapt to singular behaviors in the vanishing-noise limit. We also apply the IPM to explore situations with no explicit formulas of the vanishing-noise limit. Our paper appears to be the first one to obtain numerical results of principal eigenvalue problems for non-self-adjoint operators in such high dimensions.

\medskip
\noindent{\textbf{Keywords.} interacting particle methods, principal eigenvalues, large deviation rate functions, vanishing-noise limits, high dimensions}

\medskip
\noindent{{\textbf{AMS subject classifications.}} 37M25, 47D08, 60F10, 82C31}
\end{abstract}

\section{Introduction}
\label{sec: intro}

The problem we are interested in concerns the time reversibility of diffusion processes, as famously studied
by Kolmogorov as early as 1937 \cite{K37}. He found among other things that, with $V$ a smooth potential function and $b$ a non-conservative smooth vector field,
stochastic differential equations (SDEs) in $\mathbb{R}^d$ of the form
\begin{equation}
    \begin{cases}
        \d X_t = - \nabla V(X_t) \, \d t +\, b(X_t)\, \d t + \sqrt{2 \varepsilon} \, \d B_t, \\
        \phantom{d}X_0 \sim \mu
    \end{cases}
    \label{SDE1}
\end{equation}
are invariant under time reversal only when $b = 0$ and the density of the initial measure $\mu$ is proportional to $\exp(-\varepsilon^{-1}V)$; see Section \ref{sec: Feynman--Kac semigroups} for precise statements of the assumptions on $V$ and $b$ we will work with.
While time-reversed models have a long history of applications to fields such as signal processing \cite{ljung1976backwards,sidhu1976new} and electric circuit theories \cite{anderson1979forwards,anderson1979passive}, and have been adopted in recent years as a way to generate high-quality images
in computer vision \cite{SoDiff2005,DDM2020}, we will focus on questions from stochastic thermodynamics.
When the time reversal of a diffusion process is still a diffusion process \cite{And_80,haussmann1986time}, a natural question is how distinguishable the two processes are, i.e.\ how irreversible the original diffusion is.
One classical way to quantify irreversibility is to compute an observable called entropy production.
In the large-time limit or the steady-state regime, the entropy production for \eqref{SDE1} can be computed through the Clausius-like entropy (Stratonovich) integral
\begin{equation}
\label{eq:def-Seps}
    S^\varepsilon_t = \frac 1{\varepsilon} \, \int_{0}^{t}\, \langle \, b(X_s), \circ \, \d X_s \rangle,
\end{equation}
which in the language of statistical thermodynamics is the work done by the non-conservative part of
the drift force in \eqref{SDE1}, rescaled by temperature \cite{Ku98,LS99}. Here, the definition and physical interpretation of the entropy production \eqref{eq:def-Seps} for \eqref{SDE1} rely on the interpretation as a small-mass approximation (a.k.a.\ Kramers--Smoluchowski limit); it should be adapted naturally in the presence of momentum variables, which should change sign under time reversal; see e.g. \cite{EPRB99,JPS17,LS99}. We refer the readers to \cite{DZ23} for a discussion of other decompositions of the drift force and to \cite{JPS17,raquepas2020large} for a rigorous comparison with other measures of irreversibility, including the point of view of hypothesis testing of the arrow of time.
The study of these different notions of entropy production\,---\,and more precisely of their large deviations\,---\,has driven important theoretical progress in non-equilibrium statistical physics since the 1990s; see e.g. \cite{Cr99,ECM93,ES94,GC95,Ku98,LS99,vZC03}. One key feature of the theory of entropy production is that the positivity of the mean entropy production rate is considered as a key signature of steady non-equilibrium phenomena.

Let $\operatorname{Prob}^{\mu,\, \varepsilon}$ refer to the law for the solution of \eqref{SDE1} starting from an initial measure $\mu$, which we assume for simplicity to have a smooth, positive, rapidly decaying density with respect to the Lebesgue measure on $\mathbb{R}^d$. The large deviation rate function $I^\varepsilon: \mathbb{R} \to [0,\infty]$ in this problem is the function that gives the exponential rate of decay in $t$ of fluctuations of order $t$ in $S^\varepsilon_t$,
\begin{equation}
\label{eq:rough-LDP}
    \operatorname{Prob}^{\mu,\varepsilon} \{ t^{-1}S^\varepsilon_t \approx s\} \asymp \exp\left(-t I^\varepsilon(s)\right)
\end{equation}
as $t\to\infty$; see Section \ref{sec: Feynman--Kac semigroups} for a more precise formulation of the large deviation principle. We are interested in an efficient way of numerically computing this rate function.

Before we discuss numerical considerations, let us briefly explain how the rate function is related to an eigenvalue computation.  The moment-generating function of $S_t^\varepsilon$ with respect to $\operatorname{Prob}^{\mu,\, \varepsilon}$ is
\begin{equation}
    \chi_{t}^{\varepsilon}(\alpha)
    = \int_{C_t}\, \exp( -\alpha S_{t}^{\varepsilon} ) \, \d\!\operatorname{Prob}^{\mu,\, \varepsilon},
\label{MGF1}
\end{equation}
where $\alpha \in \bR$ and $C_t$ is the space $C([0, t]; \mathbb{R}^d)$ of continuous paths in $\mathbb{R}^d$ over the time interval $[0, t]$.
Under our assumptions, the following Feynman--Kac representation of the moment-generating function $\chi_{t}^{\varepsilon}(\alpha)$ holds:
\begin{equation}
    \chi_{t}^{\varepsilon}(\alpha)
    = \int_{\mathbb{R}^d}  \left(\exp( t \,  \sA^{\varepsilon, \alpha} ) \1 \right)(\xi) \, \d \mu(\xi)\,,
\label{chi1}
\end{equation}
where the operator $\sA^{\varepsilon, \alpha}$ is a second-order differential operator that is elliptic but not self-adjoint.
Such a representation dates at least back to \cite{Ku98,LS99} and relies on Girsanov's theorem and the Feynman--Kac formula; we refer to \cite{BDG15,raquepas2020large} for rigorous proofs that cover our hypotheses. With $\lambda^{\varepsilon, \alpha}$ the principal eigenvalue (the one with the largest real part) of $\sA^{\varepsilon, \alpha}$, the identity
\begin{equation}
    \lim_{t \rightarrow \infty} \frac{1}{t}
    \log \chi_{t}^{\varepsilon}(\alpha) = \lambda^{\varepsilon, \alpha} \label{chi2}
\end{equation}
provides a spectral-theoretic point of view on the large-$t$ behavior of the moment-generating function, which is instrumental in the study of large deviations. The moment-generating function is convex in $\alpha$ and symmetric about $\alpha=\tfrac 12$. The spectral-theoretic point of view provides tools for showing smoothness in $\alpha$. The Legendre transform of $\lambda^{\varepsilon, \alpha}$ in the variable $\alpha$ is the large deviation rate function $I^\varepsilon$ in \eqref{eq:rough-LDP}:
\begin{equation} \label{eq: Legendre transform}
I^\varepsilon(s) = \sup_{\alpha} \left(-\alpha s - \lambda^{\varepsilon, \alpha}\right).
\end{equation}
The symmetry about $\alpha = \tfrac 12$ gives rise to the Gallavotti--Cohen symmetry $I^\varepsilon(-s) = I^\varepsilon(s) + s$. In sufficiently regular situations, many statistical properties of the family $(S_t^{\varepsilon})_{t>0}$ can be equivalently read off the limiting cumulant-generating function $\lambda^{\varepsilon,\alpha}$ or off the rate function $I^{\varepsilon}(s)$. For example, the asymptotic mean entropy production per unit time is both $-\partial_\alpha\lambda^{\varepsilon,\alpha}|_{\alpha = 0}$ and the zero of $I^\varepsilon$.
Again, we refer to
\cite{BDG15,JPS17,raquepas2020large} for proofs and more thorough theoretical discussions.

There are several motivations for seeking novel numerical methods for accessing $I^\varepsilon$ via $\lambda^{\varepsilon, \alpha}$. First, trying to probe the large deviations of $S_t^\varepsilon$ from direct simulations of \eqref{SDE1} and computation of \eqref{eq:def-Seps} is not realistic since these large deviations are events with exponentially small probabilities. In most cases where rigorous theorems on entropy production are proved, $\lambda^{\varepsilon, \alpha}$ is the only available access to the rate function $I^\varepsilon$, but admits no closed-form formula.
Second, the assumptions for these theorems are relatively stringent\,---\,most significantly by the non-degeneracy assumption on the noise\,---\,and we are looking for ways to explore the large deviations in situations where no rigorous results are available.
We will be particularly interested in the small-noise regime $0 < \varepsilon \ll 1$ since, under additional assumptions at the critical points of $V$, \cite{raquepas2020large} provides explicit formulas for the limits $\lambda^{0,\alpha} = \lim_{\varepsilon \to 0^+}\lambda^{\varepsilon,\alpha}$ for $\alpha$ in an interval of the form $(-\delta,1+\delta)$ and $I^{0}(s) = \lim_{\varepsilon \to 0^+}I^{\varepsilon}(s)$ for $s$ in an interval around the mean entropy production rate, allowing us to compare our numerical results.\footnote{The physical and technical reasons for the restriction to values of $\alpha$ near the interval $[0,1]$ are beyond the scope of the present article; we refer the reader to \cite{JPS17,raquepas2020large,vZC03}. All of our numerical experiments abide by the appropriate restrictions on $\alpha$, except for Example \ref{example: 2D_circle}.} In the presence of momentum variables, the vanishing-noise limit has attracted independent interest in the physics literature since \cite{Ku07}, due to its relation to deterministic systems; it still does to this day \cite{BGL22,raquepas2020large,Mo23}. We will come back to this point in Section \ref{sec: Feynman--Kac semigroups}.

In this paper, we develop an interacting particle method (IPM) \cite{del2004feynman,doucet2001sequential,ferre2019error,hairer2014improved,lelievre2010free} for numerically computing $\lambda^{\varepsilon, \alpha}$\,---\,and thus $I^\varepsilon(s)$\,---\,at $0 < \varepsilon \ll 1$.
    More precisely, we consider an $\alpha$- and $\varepsilon$-dependent, discrete-time semigroup obtained from an operator splitting and an Euler--Maruyama scheme with a time step size $\Delta t$, and then show that the spectral radius associated with this discrete-time semigroup has the following two properties:
    \begin{itemize}
        \item on the one hand, it is accessible through large iterates of the semigroup and lends itself to the IPM, thanks to suitable stability properties \cite{ferre2021more}; see Propositions \ref{prop: continuous stability} and \ref{prop: discrete stability};
        \item on the other hand, it provides a good approximation of $\lambda^{\varepsilon, \alpha}$ for small $\Delta t$, thanks to different results from (non-self-adjoint) perturbation theory \cite{AP68,Kat,Tr59}; see Theorem \ref{thm: convergence of IPM wrt time step size}, which appears to be the first convergence result of time discretization of Feynman--Kac semigroups in unbounded domains for computing the principal eigenvalue.
    \end{itemize}
We also discuss techniques for setting the measure of initial conditions to obtain faster approximations of this spectral radius.

To put things into perspective, let us briefly discuss the computational difficulties. A conventional way to obtain $\lambda^{\varepsilon, \alpha}$ numerically is to compute it directly from $\sA^{\varepsilon, \alpha}$, by first discretizing $\sA^{\varepsilon, \alpha}$ using traditional mesh-based methods, e.g., finite element methods \cite{sun2016finite} or finite difference methods \cite{carasso1969finite,kuttler1970finite}, and then solving the principal eigenvalue problem of the resulting non-symmetric matrix using, e.g., QR methods \cite{watkins1993some} or Arnoldi methods \cite{saad1980variations,sorensen1992implicit}. However, the following three issues pose great challenges to such kinds of methods.
\begin{enumerate}
    \item[1.] \emph{Unboundedness of the physical domain}: Since the stochastic dynamics \eqref{SDE1} is defined in all of $\mathbb{R}^d$, truncation of the domain is usually needed in mesh-based methods \cite{han2013artificial}, and this may introduce numerical errors.
    \item[2.] \emph{High dimensionality}: Having in mind applications to stochastic thermodynamics in which the dimension $d$ of $X_t$ in \eqref{SDE1} is proportional to the number of particles, we would like to be able to handle situations where $d$ is large, but most mesh-based methods suffer from the curse of dimensionality.
    \item[3.] \emph{Singularities in the vanishing-noise limit}: With $\psi^{\varepsilon, \alpha}$ the normalized principal eigenfunction, it is known from \cite{fleming1997asymptotics} that $\varepsilon \log \psi^{\varepsilon, \alpha}$ has a nontrivial limit as $\varepsilon \rightarrow 0^+$ under certain additional conditions. This implies that $\psi^{\varepsilon, \alpha}$ is asymptotically proportional to $\exp(- \varepsilon^{-1} \Psi^\alpha)$ for some function $\Psi^\alpha$ and thus admits singularities in the vanishing-noise limit. For mesh-based methods, finer grids are needed to capture the singularity.
\end{enumerate}
On the other hand, the IPM provides an alternative to the computation of $\lambda^{\varepsilon, \alpha}$ from the perspective of Feynman--Kac semigroups, which has already been applied to the computation of ground state energies of Schr\"{o}dinger operators using diffusion Monte Carlo \cite{anderson1975random,ceperley1980ground,foulkes2001quantum,grimm1971monte}, to the computation of effective diffusivity \cite{lyu2020convergence,wang2018computing,wang2018sharp,wang2022computing} and KPP front speeds \cite{lyu2022convergent,zhang2023convergent}, and to non-linear filtering problems \cite{del1997nonlinear,del1999central,del2000branching}, to mention only a few. Since the IPM is based on simulations of an SDE, it naturally applies to unbounded domains and it is independent of whether the operator whose principal eigenvalue is sought is self-adjoint. In addition, the IPM is an analogue of diffusion Monte Carlo, which uses two randomization steps to avoid exponential explosion of complexity in $d$ for a fixed accuracy level, i.e., approximating Feynman--Kac semigroups by distributions and reducing variance by resampling \cite{lim2017fast}. Also, the IPM scales easily to high dimensions in terms of coding.
In numerical examples of different values of $d$ (up to 16), the numerical approximation of $\lambda^{\varepsilon, \alpha}$ converges to its predicted vanishing-noise limit within visual tolerance with a fixed number of particles and a fixed time step size, which shows the scalability and robustness of our method for large $d$ and small $\varepsilon$. Moreover, the empirical density of the particles we obtain at the final time (after resampling) accurately captures singularities of the vanishing-noise limit, which is compatible with \cite{fleming1997asymptotics}. We point out here that Feynman--Kac semigroups have a long history in large deviation theory. 
Theoretical studies focused on establishing large deviation principles in different applications with variational formulae developed for rate functions that can be accessed through Feynman--Kac semigroups; see, e.g., \cite{hollander2000large,donsker1975variational,kontoyiannis2005large,touchette2009large,varadhan1984large,wu2001large}.
Numerically, the population dynamics and its variant,  which are analogues of IPM, were applied to the calculation of rate functions; see, e.g., \cite{giardina2006direct,hidalgo2017finite,lecomte2007numerical,nemoto2016population,nemoto2017finite,tailleur2009simulation}. However, few theoretical results on the properties of these methods are found; one exception is \cite{nemoto2017finite}, in which systematic errors and stochastic errors of the population dynamics were studied but only in the setting of a discrete and finite state space (i.e., the physical domain in our context). Moreover, although there are numerical examples of computing rate functions using the population dynamics for high-dimensional cases (see, e.g., \cite{giardina2006direct,lecomte2007numerical,tailleur2009simulation}), these computations were still limited to the setting of a discrete and finite state space. To the best of our knowledge, in the setting of a non-compact physical domain there are so far no numerical results of the challenging high-dimensional case using analogues of IPM.

The rest of this paper is organized as follows. In Section \ref{sec: Feynman--Kac semigroups}, we present the Feynman--Kac semigroup formulation of the principal eigenvalue problem and the formulation of the large deviation principle. In Section \ref{sec: numerical discretization}, we introduce the discrete-time semigroup at the heart of our numerical approximation and present our theoretical results on the corresponding spectral radius.
In Section \ref{sec: IPM}, we present the interacting particle algorithm and techniques for setting the initial measure.
We begin Section \ref{sec: numerical examples} with numerical examples of dimensions up to 16, exploring the vanishing-noise limit. We find excellent agreement of our numerical examples with the explicit theoretical predictions when the analytical vanishing-noise limits exist in tractable form; another numerical experiment allows us to probe situations for which we are not aware of explicit theoretical predictions for the vanishing-noise limit. We end Section \ref{sec: numerical examples} with convergence tests of our IPM. Finally, we give some concluding remarks in Section \ref{sec: conclusion}. 
We provide proof sketches of propositions and theorems in Appendix \ref{append: proofs}.

\paragraph{Notation} Let $\cP(\bR^d)$ be the space of all probability measures over $\bR^d$. For a measure $\mu$ with finite mass, let $\pair{\mu}{\varphi} = \int_{\bR^d} \varphi \,\d\mu$ for any $\varphi \in L^{\infty}(\bR^d)$.
We use $C_0$ for the space of continuous real-valued functions on $\bR^d$ that vanish at infinity, and
given a function $W: \bR^d \rightarrow [1, +\infty)$, we use the notation
\begin{align}
    L^{\infty}_W(\bR^d) = \left\{ \varphi\in L^\infty_\textnormal{loc}(\bR^d) :  \left\| \frac{\varphi}{W} \right\|_{L^{\infty}(\bR^d)} < +\infty \right\}.
\end{align}
We use $|\,\cdot\,|$ for the Euclidean norm on $\bR^d$ and $\|\,\cdot\,\|$ for the operator norm it induces on $d$-by-$d$ matrices.

\section{Continuous-time Feynman--Kac semigroups and large deviations}
\label{sec: Feynman--Kac semigroups}

This section, together with Section \ref{sec: numerical discretization}, serves to show that the principal eigenvalue $\lambda^{\varepsilon, \alpha}$ of $\sA^{\varepsilon, \alpha}$ in \eqref{MGF1}--\eqref{chi2}
can be approximated using the spectral radius of a discrete-time semigroup, which will then be combined with particle systems and resampling to yield our IPM in Section \ref{sec: IPM}. Before proceeding further, we make some assumptions on $V$ and $b$, trying to strike a balance between optimality and readability.

\begin{assumption}
\label{assumption: quadratic growth of potential}
    We assume that (1) $V \in C^{\infty}(\mathbb{R}^d)$; (2) there exists a positive-definite matrix $H_0$ such that $\langle x, H_0 \nabla V(x) \rangle \geq |x|^2$ whenever $|x|$ is large enough; (3) $\|D^2 V(x)\| = o(|\nabla V(x)|)$ as $|x| \to \infty$.
\end{assumption}

\begin{assumption}
\label{assumption: bounded velocity}
    We assume that (1) $b \in (C^{\infty}(\mathbb{R}^d))^d$; (2) $\| b \|_{C^1(\mathbb{R}^d)} < \infty$; (3) $\langle b,\nabla V\rangle \leq c|\nabla V|^2$ for some constant $0 < c < \tfrac 12$.
\end{assumption}

Recall that the formulas \eqref{chi1}--\eqref{chi2} appealed to an elliptic operator $\mathscr{A}^{\varepsilon, \alpha}$ as the generator of a semigroup. On a suitable function space, this semigroup is compact and irreducible and this is what guarantees that the principal eigenvalue $\lambda^{\varepsilon, \alpha}$ of $\mathscr{A}^{\varepsilon, \alpha}$ appropriately captures the large-$t$ behavior of the moment-generating function $\chi_t^{\varepsilon}(\alpha)$; see e.g. \cite{BDG15,raquepas2020large}.
For the analysis of the present paper, we will instead work with the spectrally equivalent operator
\begin{align}
    \cA^{\varepsilon, \alpha} f := &
    \exp((- 2 \varepsilon)^{-1} V) \sA^{\varepsilon, \alpha} (\exp((2 \varepsilon)^{-1} V)) \nonumber \\
    \phantom{:}= &
    \varepsilon \Delta f + \langle (1 - 2 \alpha) b, \nabla f \rangle - \frac{1}{4 \varepsilon} |\nabla V|^2 f + \frac{1}{2 \varepsilon} \langle b, \nabla V \rangle f - \frac{\alpha (1 - \alpha)}{\varepsilon} |b|^2 f + \frac{1}{2} (\Delta V) f - \alpha (\nabla \cdot b) f,
\end{align}
which is also associated with a semigroup, which we now take the time to describe.
Define the operator $\mathcal{L}^{\varepsilon, \alpha}$ by
\begin{align}
\mathcal{L}^{\varepsilon, \alpha} f = \varepsilon \Delta f + \langle(1 - 2 \alpha) b, \nabla f\rangle
\end{align}
on sufficiently regular functions and let
\begin{align}
\label{eq:def-U}
U^{\varepsilon, \alpha} = & - \frac{1}{4 \varepsilon} |\nabla V|^2 + \frac{1}{2 \varepsilon} \langle b, \nabla V \rangle - \frac{\alpha (1 - \alpha)}{\varepsilon} |b|^2 + \frac{1}{2} (\Delta V) - \alpha (\nabla \cdot b).
\end{align}

For readability, let us fix $\alpha$ and $\varepsilon$ and omit the dependence on $\alpha$ and $\varepsilon$ from the notation for the time being.
Consider the SDE with infinitesimal generator $\mathcal{L}$, i.e.,
\begin{align} \label{eq: continuous SDE}
\d X_t = (1 - 2 \alpha) b \,\d t + \sqrt{2 \varepsilon} \,\d B_t,
\end{align}
with $B_t$ a $d$-dimensional Brownian motion, and the evolution operator $P^U_t$ defined by
\begin{align}
    P^U_t \varphi(x) = \E \left[\varphi(X_t) \exp \left(\int_0^t U(X_s) \d s \right) \Big{|} X_0 = x \right],
\end{align}
where $\E$ is the expectation over all realizations of \eqref{eq: continuous SDE} and $\varphi$ is a function in a suitable space.
With natural choices of domain and space, $\cA = \cL + U$ is indeed the generator of the positivity-preserving semigroup $(P^U_t)_{t > 0}$ with the same desirable properties as that generated by $\mathscr{A}$\,---\,albeit on a different space. While these properties and their consequences can be obtained in many different ways, we present a result that foreshadows our upcoming analysis of the discrete semigroups behind our IPM, and the result is an application of \cite[Theorem 3]{ferre2021more} to our problem; see Appendix \ref{append: proofs} for a proof sketch.

\begin{proposition} \label{prop: continuous stability}
Let $W(x) = \mathrm{e}^{\theta|x|^2}$ and suppose that Assumptions \ref{assumption: quadratic growth of potential}--\ref{assumption: bounded velocity} hold. For $\theta>0$ small enough, there exists a unique measure $\mu^{\star}_U \in \cP(\bR^d)$ with $\pair{\mu^{\star}_U}{W} < +\infty$ and a constant $\kappa > 0$ with the following property: for any initial measure $\mu \in \cP(\bR^d)$ with $\pair{\mu}{W} < + \infty$, there exists a constant $C_{\mu} > 0$ such that
\begin{align}
\label{eq: convergence of conitinous Feynman--Kac semigroup}
\left| \frac{\pair{\mu}{P^U_t \varphi}}{\pair{\mu}{P^U_t \1}} - \pair{\mu^{\star}_U}{\varphi}\right| \le C_{\mu} \mathrm{e}^{- \kappa t} \| \varphi \|_{L^{\infty}_W}
\end{align}
for all $\varphi \in L^{\infty}_W(\bR^d)$ and $t > 0$. Moreover, the leading eigenvalue $\lambda$ of $\mathscr{A}$ is simple and real, and
\begin{align}
\label{eq: computation of eigenvalue in contiuous case}
\lambda = \lim \limits_{t \rightarrow \infty} \frac{1}{t} \log \E \left[ \exp \left( \int_0^t U(X_s) \d s \right) \bigg{|} X_0 \sim \mu \right],
\end{align}
where the expectation runs over the initial condition $X_0 \sim \mu$ and all realizations of \eqref{eq: continuous SDE}.
\end{proposition}

We now reintroduce the dependence on $\alpha$ and $\varepsilon$ in the notation.
We also note that it follows from standard perturbation-theory arguments that the limiting function $\alpha \mapsto \lambda^{\varepsilon, \alpha}$ is real-analytic. Hence, by \eqref{chi2} and the G\"artner--Ellis theorem, the following large deviation principle holds: with $I^\varepsilon$ the Legendre transform of the function $\alpha \mapsto \lambda^{\varepsilon, \alpha}$, we have
\begin{align*}
    -\inf_{s \in \operatorname{int} E} I^\varepsilon(s)
        \leq \liminf_{t\to\infty} \frac 1t \log \operatorname{Prob}^{\mu,\varepsilon}\left\{\tfrac 1t S_t^\varepsilon \in E \right\} \leq \limsup_{t\to\infty} \frac 1t \log \operatorname{Prob}^{\mu,\varepsilon}\left\{\tfrac 1t S_t^\varepsilon \in E \right\}
        \leq -\inf_{s \in \operatorname{cl} E} I^\varepsilon(s)
\end{align*}
for every Borel set $E\subseteq \mathbb{R}$; again see \cite{BDG15,raquepas2020large}. It was shown in \cite{raquepas2020large} that, locally and under additional conditions at the critical points of $V$, easily accessible formulas can be given in the subsequent limit $\varepsilon \to 0^+$, without any rescaling of $\lambda^{\varepsilon, \alpha}$ or $I^\varepsilon$. Roughly speaking, this means that we get easy access to a limiting rate function $I^0$ such that
\begin{equation}
\label{eq:rough-LDP-0}
        \operatorname{Prob}^{\mu,\varepsilon}\{ t^{-1}S^\varepsilon_t \approx s\} \asymp \exp\left(-t I^0(s)\right)
\end{equation}
for $t \gg \varepsilon^{-1} \gg 1$ and $s$ near the mean entropy production rate.
These additional conditions will be met here if we further assume that
\begin{equation}
\label{eq:ND-on-V}
    \det D^2 V|_{x_j} \neq 0
\end{equation}
at each of the finitely many critical points $\{x_j\}_{j=1}^J$ of $V$, and that
\begin{equation}
\label{eq:ND-on-b-vs-V}
    |b(x_j + \xi)| = O( |D^2V|_{x_j} \xi|).
\end{equation}
These extra conditions force the deterministic dynamics obtained by plainly putting $\varepsilon = 0$ in \eqref{SDE1} to have only very simple invariant structures. The limiting $\lambda^{0,\alpha}$ turns out to be the principal eigenvalue for a quadratic approximation of $\cA$ at some  $\alpha$-dependent choice of critical point of $V$, in such a way that the limiting $I^{0}$ is the convex envelope of different rate functions that would arise from linear diffusions approximating \eqref{SDE1} near critical points of $V$.

Suppose on the contrary, that \eqref{eq:ND-on-V}--\eqref{eq:ND-on-b-vs-V} fail, say because $V$ has a whole critical circle to which $b$ is tangent as in \cite[Section 5]{BDG15}. Then, we expect to see, as $\varepsilon \to 0^+$, the principal eigenvalue $\lambda^{\varepsilon, \alpha}$ diverge for $\alpha \notin [0,1]$.
In such situations, one can consider the rescaling of \cite{BDG15,BGL22} to obtain further information on the behaviour of those divergences and their relations to the deterministic dynamics and Freidlin--Wentzell theory. We will explore this numerically in Section \ref{sec: numerical examples}.

\section{Time discretization using discrete-time semigroups}
\label{sec: numerical discretization}

We again fix $\alpha$ and $\varepsilon$ and omit keeping track of them in the notation. To compute the principal eigenvalue $\lambda$, we consider a time discretization of the operator semigroup $(P^U_t)_{t > 0}$, which consists of two steps: an operator splitting scheme and an Euler--Maruyama scheme for the SDE \eqref{eq: continuous SDE}.

With a time step size $\Delta t > 0$, define an evolution operator $\wP^U_{\Delta t}$ by
\begin{align}
\wP^U_{\Delta t} \varphi(x) = \exp(\Delta t U(x)) \E \left[ \varphi(X_{\Delta t}) | X_0 = x \right],
\end{align}
where $X_{\Delta t}$ is the solution to \eqref{eq: continuous SDE} at time $\Delta t$ and $\varphi$ is a function in a suitable space. Note that if we define an operator $P_t$ by
\begin{align} \label{eq: diffusion operator}
P_t \varphi(x) = \E \left[ \varphi(X_t) | X_0 = x \right],
\end{align}
then $P_t = \exp(t \cL)$ on a suitable space. Hence, $\wP^U_{\Delta t} = \exp(\Delta t U) \exp(\Delta t \cL)$ can be seen as an approximation of $P^U_{\Delta t}$ using an operator splitting scheme. One can show using arguments similar to those in the proof of Proposition \ref{prop: continuous stability} for $P^U_{\Delta t}$ (Appendix \ref{append: proofs}) that the spectral radius $\wLambda_{\Delta t}$ of $\wP^U_{\Delta t}$ admits a positive eigenvector and that no other eigenvalue admits a positive eigenvector.
It should be expected that, for $\Delta t \ll 1$, we have
$\log \wLambda_{\Delta t} \approx {\Delta t} \lambda$.
We will come back to this point at the end of this section.

We now further discretize $\wP^U_{\Delta t}$ by considering an Euler--Maruyama scheme for \eqref{eq: continuous SDE} with the time step size $\Delta t$, which reads
\begin{align}
    \label{eq: Euler scheme}
    \begin{cases}
        \widehat{X}_{n + 1} = \widehat{X}_n + (1 - 2 \alpha) b(\widehat{X}_n) \Delta t + \sqrt{2 \varepsilon \Delta t} G_n, \\
        \quad\, \widehat{X}_0 \sim \mu,
    \end{cases}
\end{align}
where $G_n$ is a $d$-dimensional standard Gaussian random variable. We define the evolution operator $\hP_{\Delta t}$ by
\begin{align}
    \hP_{\Delta t}\varphi(x) = \E[\varphi(\widehat{X}_{n + 1}) | \widehat{X}_n = x],
\end{align}
and define $\hP_{\Delta t}^U$ by
\begin{align}
    \hP_{\Delta t}^U\varphi(x) = \exp(U(x) \Delta t) \hP_{\Delta t}\varphi(x).
\end{align}
In view of the good convergence properties of the Euler--Mayurama scheme and the growth of $U$,
we expect the spectral radius $\hLambda_{\Delta t}$ of $\hP_{\Delta t}^U$ to satisfy
$
    \log \hLambda_{\Delta t} \approx
    \log \wLambda_{\Delta t}
$
for $\Delta t \ll 1$.

We define a normalized, discrete-time, dual Feynman--Kac semigroup
associated with $\hP_{\Delta t}^U$ by
\begin{align}
\label{eq: FK semigroup of Euler scheme}
\pair{\Phi_{k, \Delta t}\mu}{\varphi} = \frac{\pair{\mu}{(\hP_{\Delta t}^U)^k \varphi}}{\pair{\mu}{(\hP_{\Delta t}^U)^k \1}} = \frac{\E \left[ \varphi(\widehat{X}_k) \exp \left( \Delta t \sum_{j = 0}^{k - 1} U(\widehat{X}_j) \right) \Big{|} \widehat{X}_0 \sim \mu \right]}{\E \left[ \exp \left( \Delta t \sum_{j = 0}^{k - 1} U(\widehat{X}_j) \right) \Big{|} \widehat{X}_0 \sim \mu \right]}
\end{align}
for any initial measure $\mu$ and any bounded measurable function $\varphi$. 
The following proposition, which is an application of \cite[Theorem 1]{ferre2021more} to our problem, establishes desirable stability properties of $\Phi_{k,\Delta t}$ for the purpose of numerically accessing the spectral radius $\hLambda_{\Delta t}$; see Appendix \ref{append: proofs} for a proof sketch.

\begin{proposition} \label{prop: discrete stability}
    Suppose that Assumptions \ref{assumption: quadratic growth of potential}--\ref{assumption: bounded velocity} hold. Then, there exists a measure $\hmu^{\star}_{U, \Delta t} \in \cP(\bR^d)$ with $\hLambda_{\Delta t} = \pair{\hmu_{U,\Delta t}^\star}{\hP_{\Delta t}^U\1}$  and a constant $\widehat{\beta} \in (0, 1)$ with the following property: for any initial measure $\mu \in \cP(\bR^d)$, there is a constant $C_{\mu}$ for which
    \begin{align}
    \label{eq: stability of Euler scheme}
        | \pair{\Phi_{k, \Delta t}\mu}{\varphi} - \pair{\hmu^{\star}_{U, \Delta t}}{\varphi}| \le C_{\mu} \widehat{\beta}^k || \varphi ||_{L^{\infty}}
    \end{align}
    for all $\varphi \in L^{\infty}(\bR^d)$ and $k \ge 1$. Moreover,
    \begin{align}
    \label{eq: CGF in Euler scheme}
        \log \hLambda_{\Delta t} = \lim \limits_{k \rightarrow \infty} \frac{1}{k} \log \E \Bigg[ \exp \Bigg( \Delta t \sum_{j = 0}^{k - 1} U(\widehat{X}_j) \Bigg) \Bigg{|} \widehat{X}_0 \sim \mu \Bigg].
    \end{align}
\end{proposition}

The aforementioned intuition that
$
    \log \hLambda_{\Delta t} \approx \log \wLambda_{\Delta t} \approx {\Delta t}\lambda
$
for $\Delta t \ll 1$
can indeed be turned into the following soft convergence result; see Appendix \ref{append: proofs} for a proof sketch.

\begin{theorem} \label{thm: convergence of IPM wrt time step size}
    Under Assumptions \ref{assumption: quadratic growth of potential}--\ref{assumption: bounded velocity} we have
    \[
        \lim_{n \to \infty} n \log \hLambda_{Tn^{-1}} = \lim_{n\to\infty} n \log \wLambda_{Tn^{-1}} = T\lambda.
    \]
    for every $T>0$.
\end{theorem}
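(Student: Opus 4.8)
The plan is to convert the claimed asymptotics into a statement about convergence of \emph{spectral radii} and then combine an operator-level product formula with non-self-adjoint perturbation theory and the Perron--Frobenius/Krein--Rutman structure already recorded for these operators. Fix $T>0$. Since $\mathrm{spr}(A^n)=\mathrm{spr}(A)^n$ for any bounded operator, we have $n\log\wLambda_{T/n}=\log\mathrm{spr}\bigl((\wP^U_{T/n})^n\bigr)$ and $n\log\hLambda_{T/n}=\log\mathrm{spr}\bigl((\hP^U_{T/n})^n\bigr)$, while (as in the proof of Theorem~\ref{thm: continuous stability}) the spectral mapping theorem gives $\mathrm{spr}(P^U_T)=\mathrm{e}^{T\lambda}$; here $\mathrm{e}^{T\lambda}$ is an algebraically simple, isolated eigenvalue of the compact operator $P^U_T$ with positive eigenfunction, and the rest of its spectrum lies in a disk of radius $r<\mathrm{e}^{T\lambda}$. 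It therefore suffices to show $\mathrm{spr}\bigl((\wP^U_{T/n})^n\bigr)\to\mathrm{e}^{T\lambda}$ and $\mathrm{spr}\bigl((\hP^U_{T/n})^n\bigr)\to\mathrm{e}^{T\lambda}$.

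The next step is operator convergence. The family $(\wP^U_{T/n})^n=\bigl(\mathrm{e}^{(T/n)U}\mathrm{e}^{(T/n)\cL}\bigr)^n$ is the Lie--Trotter product for the splitting $\cA=U+\cL$, so Trotter's product formula~\cite{Tr59} yields $(\wP^U_{T/n})^n\to P^U_T$; for the Euler scheme, an It\^o--Taylor expansion shows that $\hP_{\Delta t}$ is a first-order consistent approximation of $\mathrm{e}^{\Delta t\cL}$, so that $F(\Delta t):=\mathrm{e}^{\Delta t U}\hP_{\Delta t}$ satisfies $F(0)=\mathrm{Id}$ and $F'(0)=\cA$ on a core of smooth functions, and Chernoff's product formula gives $(\hP^U_{T/n})^n\to P^U_T$. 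The uniform quasi-contractivity $\|(\wP^U_{T/n})^k\|,\|(\hP^U_{T/n})^k\|\le M\mathrm{e}^{\omega kT/n}$ needed for these formulas is exactly the type of bound furnished by the Lyapunov functions and minorization estimates behind Theorems~\ref{thm: continuous stability}--\ref{thmA: discrete stability} together with the boundedness from above of $U$; and the smoothing (indeed compactness for positive times) of the diffusion semigroup $\mathrm{e}^{t\cL}$ on $L^{\infty}_W(\bR^d)$ is what lets one upgrade this convergence to the operator-norm topology.

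Since $P^U_T$ is compact with $\mathrm{e}^{T\lambda}$ an isolated simple eigenvalue separated from the rest of the spectrum, non-self-adjoint perturbation theory~\cite{Kat,AP68} applied along these norm-convergent sequences shows that, for $n$ large, each of $(\wP^U_{T/n})^n$ and $(\hP^U_{T/n})^n$ has a unique eigenvalue $\mu_n$ in a small neighbourhood of $\mathrm{e}^{T\lambda}$, that $\mu_n\to\mathrm{e}^{T\lambda}$, and that all other spectrum lies in a disk of radius $r+o(1)<|\mu_n|$. On the other hand, by the Krein--Rutman statements recorded for $\wP^U_{\Delta t}$ and $\hP^U_{\Delta t}$, the numbers $\wLambda_{T/n}^n=\mathrm{spr}\bigl((\wP^U_{T/n})^n\bigr)$ and $\hLambda_{T/n}^n=\mathrm{spr}\bigl((\hP^U_{T/n})^n\bigr)$ are positive eigenvalues of these operators, of maximal modulus; since for $n$ large the only spectral value of maximal modulus is $\mu_n$ (a non-real $\mu_n$ would entail a second eigenvalue $\overline{\mu_n}$ of the same modulus), we get $\wLambda_{T/n}^n=\mu_n\to\mathrm{e}^{T\lambda}$ and $\hLambda_{T/n}^n=\mu_n\to\mathrm{e}^{T\lambda}$, i.e.\ $n\log\wLambda_{T/n}\to T\lambda$ and $n\log\hLambda_{T/n}\to T\lambda$.

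The main obstacle is the upgrade from the strong convergence that Trotter/Chernoff give for free to the operator-norm convergence (or a workable substitute, such as collective compactness of $\{(\wP^U_{T/n})^n\}_n$) needed to invoke perturbation theory for the \emph{dominant} eigenvalue rather than merely some isolated eigenvalue. Resolving it requires exploiting that $\mathrm{e}^{t\cL}$ is analytic and compact for $t>0$, that $U$ is bounded above so that $\cA$ is a manageable perturbation of $\cL$ despite being unbounded, and---most robustly---the uniform-in-$\Delta t$ Lyapunov and minorization estimates underlying Theorems~\ref{thm: continuous stability}--\ref{thmA: discrete stability}, which yield a spectral gap for $(\wP^U_{T/n})^n$ and $(\hP^U_{T/n})^n$ that is uniform in $n$ and hence persists in the limit.
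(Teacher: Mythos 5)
Your high-level architecture is right and closely parallels the paper's: reduce to the statement that the spectral radii of $\widetilde{\Pi}_n := (\wP^U_{T/n})^n$ and $\widehat{\Pi}_n := (\hP^U_{T/n})^n$ converge to $\operatorname{spr}(P^U_T)=\mathrm{e}^{T\lambda}$, use Trotter for the splitting, invoke non-self-adjoint spectral perturbation theory along the lines of~\cite{Kat,AP68}, and close with the Krein--Rutman identification of the spectral radius as the unique eigenvalue with a positive eigenvector. However, there is a genuine gap, and you essentially flag it yourself in your last paragraph: Trotter (and Chernoff) only give \emph{strong} convergence $\widetilde{\Pi}_n\to P^U_T$, which is not enough to transfer the isolated dominant eigenvalue; you then write as if you may invoke perturbation theory ``along these norm-convergent sequences,'' which they are not, and the concluding paragraph only gestures at analyticity of $\mathrm{e}^{t\cL}$ and uniform Lyapunov/minorization bounds as possible ways to upgrade. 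No actual argument is supplied. That upgrade---showing that the family $\{\widetilde{\Pi}_n\varphi : n\in\mathbb{N},\,\|\varphi\|_{L^\infty}\le 1\}$ is precompact in $C_0$ (uniform bound, uniform vanishing at infinity, and equicontinuity via a coupling plus the ultra-Feller property)---is precisely the technical heart of the paper's proof (its Step~3), and it is exactly what unlocks the Anselone--Palmer theory~\cite{AP68} that makes ``strong convergence of a collectively compact family'' behave like norm convergence for the purposes of spectral convergence. Without it, the statement ``all other spectrum lies in a disk of radius $r+o(1)<|\mu_n|$'' is unsupported: that uniform spectral separation is itself the \emph{output} of the collective-compactness machinery, not an independent fact you may assume.

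A secondary difference worth noting: you propose to handle the Euler operator $\widehat{\Pi}_n$ by Chernoff's theorem, giving strong convergence $\widehat{\Pi}_n\to P^U_T$ directly. Even if this were made rigorous, it doubles the work, because the collective-compactness step would then have to be re-established for the $\widehat{\Pi}_n$ as well. The paper instead proves an \emph{operator-norm} estimate $\|\widehat{\Pi}_n-\widetilde{\Pi}_n\|\to 0$ (via growth/decay of $\mathrm{e}^{TU}$, a martingale localization~\eqref{eq:not-past-rho}, and total-variation convergence of the Euler--Maruyama scheme~\cite{BJ22}), so that the Euler operators inherit spectral-radius convergence from the Trotter operators by ordinary norm-perturbation theory. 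If you want to push your argument through, you should adopt this two-track structure: establish collective compactness for the Trotter sequence only, and bridge to the Euler sequence by a genuine operator-norm estimate.
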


\begin{remark}
Let $\hlambda_{\Delta t} = \frac{\log \hLambda_{\Delta t}}{\Delta t}$. Theorem \ref{thm: convergence of IPM wrt time step size} shows that $\lim \limits_{\Delta t \rightarrow 0} \hlambda_{\Delta t} = \lambda$, but the convergence order is unknown. In fact, in the case of compact physical domains, the convergence analysis in \cite{ferre2019error} shows that $| \hlambda_{\Delta t} - \lambda | = \mathcal{O}(\Delta t)$ as $\Delta t \rightarrow 0$, the proof of which relies on the property that $V$ and $b$ are bounded in $C^2$ and $C^1$, respectively, over the physical domain. However, $V$ is unbounded in our case, and the generalization of the aforementioned convergence analysis to non-compact physical domains is non-trivial as pointed out in \cite{ferre2019error}. We will leave it to a future study.
\end{remark}

\section{Interacting particle methods}
\label{sec: IPM}

Sections \ref{sec: Feynman--Kac semigroups} and \ref{sec: numerical discretization} show that the principal eigenvalue $\lambda^{\varepsilon, \alpha}$ can be approximated in terms of the logarithmic spectral radius in \eqref{eq: CGF in Euler scheme} that is accessible through large iterates of a discrete-time semigroup with good stability properties. We obtain the full numerical discretization by using a particle system to approximate the expectation in \eqref{eq: CGF in Euler scheme}, i.e., the IPM. Given an ensemble of particles, the IPM proceeds within each time interval as follows. The particles evolve according to the dynamics of $\hP_{\Delta t}$ with an importance weight assigned to each particle, and then to control variance \cite{ferre2019error} (or to avoid weight degeneracy \cite{lelievre2010free}) the particles are resampled according to the multinomial distribution associated with their respective weights. The logarithmic spectral radius in \eqref{eq: CGF in Euler scheme} is accessed using the particles at each time step. The complete algorithm of the IPM for computing $\lambda^{\varepsilon, \alpha}$ is given in Algorithm \ref{algo: IPM}, where we only emphasize the dependence of the final approximation $\hlambda^{\varepsilon, \alpha}_{\Delta t}$ on $\varepsilon$ and $\alpha$. Note that the particles $\{ \mathbf{q}^{n, m} \}_{m = 1}^M$ are no longer independent as soon as $n\geq 1$ but still exchangeable.

\begin{algorithm}[ht]
\caption{The interacting particle method for computing $\lambda^{\varepsilon, \alpha}$ \label{algo: IPM}}
\begin{algorithmic}
\STATE{\textbf{Input}: $\alpha$, noise level $\varepsilon$, velocity field $b$, potential $V$, number of particles $M$, initial measure $\mu$, final time $T$, time step size $\Delta t = \frac{T}{N}$.}
\STATE{Generate $M$ independent and $\mu$-distributed particles $\{\mathbf{q}^{0, m}\}_{m = 1}^M$.}
\FOR{n = 1:N}
\STATE{Compute each $\widetilde{\mathbf{q}}^{n, m}$ using the Euler--Maruyama scheme \eqref{eq: Euler scheme} with $\mathbf{q}^{n - 1, m}$ the initial value.}
\STATE{Compute each weight $w^{n - 1, m} = \exp(\Delta t U(\mathbf{q}^{n - 1, m}))$ according to \eqref{eq:def-U}.}
\STATE{Compute the quantities $P^{n - 1} = \sum_{m = 1}^M w^{n - 1, m}$ and $\hlambda^{n - 1} = \log(P^{n - 1} / M)$.}
\STATE{Compute the probabilities $p^{n - 1, m} = w^{n - 1, m} / P^{n - 1}$
and
sample $M$ non-negative integers $(K_m)_{m=1}^M$ summing to $M$ according to the multinomial law
\begin{align*}
\operatorname{Prob}\{K_1 = k_1, \ldots, K_M = k_M\} = \frac{M!}{\prod_{m = 1}^M k_m!} \prod_{m = 1}^M (p^{n - 1, m})^{k_m}.
\end{align*}}
\STATE{Set $(\mathbf{q}^{n, m})_{m = 1}^M$ to contain $K_m$ copies of $\widetilde{\mathbf{q}}^{n, m}$.}
\ENDFOR
\STATE{Compute the approximation
\begin{align} \label{eq: numerical approximation of principal eigenvalue}
    \hlambda^{\varepsilon, \alpha}_{\Delta t} = \frac{1}{T} \sum_{n = 0}^{N - 1} \hlambda^{n}
\end{align}
of the principal eigenvalue.}
\STATE{\textbf{Output}: the approximation $\hlambda^{\varepsilon, \alpha}_{\Delta t}$ of the principal eigenvalue.}
\end{algorithmic}
\end{algorithm}

\subsection{The empirical measure of particles at the final time}
\label{ssec:empirical-meas}

The empirical measure of particles at the final time $T$ (equivalently after the $N$-th step) is a random measure that is thought of as an approximation to the Feynman--Kac semigroup $\Phi_{N, \Delta t}\mu$, defined in \eqref{eq: FK semigroup of Euler scheme}, and thus to the invariant measure $\hmu^{\star}_{U, \Delta t}$ for $N$ large by \eqref{eq: stability of Euler scheme}.
To justify this, let us first consider the one-step evolution $\Phi_{1, \Delta t}\mu$.
First of all, by the Glivenko--Cantelli theorem or a variant thereof (see e.g. \cite{Fortet1953convergence,talagrand1987glivenko}), the empirical measure
$$
    \hmu_{U,\Delta t,M}^{0,+}
    :=
    \frac{1}{M}
    \sum_{m = 1}^M \delta_{\mathbf{q}^{0, m}}
$$
of the particles $\{\mathbf{q}^{0, m}\}_{m = 1}^M$ approximates $\mu$ well provided that $M$ is large.
Then, on the one hand, by \eqref{eq: FK semigroup of Euler scheme} and the definitions in Algorithm \ref{algo: IPM}, the measure $\Phi_{1, \Delta t}\mu$ can be approximated by the weighted empirical measure
\begin{align*}
\hmu^{1,-}_{U, \Delta t, M} := \Phi_{1,\Delta t} \hmu_{U,\Delta t,M}^0 = \sum_{m = 1}^M p^{0, m} \delta_{\widetilde{\mathbf{q}}^{1, m}}.
\end{align*}
On the other hand, since the multinomial law used in Algorithm \ref{algo: IPM} satisfies
\[
    \sum_{k_1, \dotsc, k_M} k_m  \operatorname{Prob}\{K_1 = k_1, \ldots, K_M = k_M\} = M p^{0,m},
\]
we have that, for any test function $\varphi$,
\begin{align*}
    & \sum_{k_1, \dotsc, k_M} \left(\frac 1M \sum_{m=1}^M \varphi(\mathbf{q}^{1, m})\right) \operatorname{Prob}\{K_1 = k_1, \ldots, K_M = k_M\}
    \\ & \qquad \qquad
        = \sum_{k_1, \dotsc, k_M} \left( \sum_{m=1}^M \frac{k_m}{M} \varphi(\tilde{\mathbf{q}}^{1, m})\right)  \operatorname{Prob}\{K_1 = k_1, \ldots, K_M = k_M\}
    \\ & \qquad \qquad
        =  \sum_{m=1}^M  \sum_{k_1, \dotsc, k_M} \frac{k_m}{M}  \operatorname{Prob}\{K_1 = k_1, \ldots, K_M = k_M\} \varphi(\tilde{\mathbf{q}}^{1, m})
    \\ & \qquad \qquad
        = \sum_{m=1}^M p^{0,m} \varphi(\tilde{\mathbf{q}}^{1, m}).
\end{align*}
Hence, the resampled empirical measure $\hmu_{U,\Delta t, M}^{1,+}$ of $\{ \mathbf{q}^{1, m} \}_{m = 1}^M$ yields, once the randomness in the resampling process is averaged out, the same expectations as the weighted empirical measure $\hmu^{1,-}_{U, \Delta t, M}$. In particular, this holds when we let $\varphi = \hP_{\Delta t}^U \exp(\Delta t U)$, which is relevant at the next step for carrying on with our approximation of the principal eigenvalue. It is expected that, for that purpose and when $M$ is large, the empirical measure $\hmu_{U,\Delta t, M}^{1,+}$ is a numerically sounder choice as it gives more importance to the regions where $\exp(\Delta t U)$ is large. We refer the readers to, e.g., \cite{del2004feynman} and \cite{lelievre2010free}, for more thorough discussions.

Iterating this argument,
the measure $\Phi_{N, \Delta t}\mu$ should indeed be well approximated by the resampled empirical measure $\hmu_{U,\Delta t, M}^{N,+}$ of $\{ \mathbf{q}^{N, m} \}_{m = 1}^M$.
In our numerical examples in Section \ref{sec: numerical examples}, as $\varepsilon \rightarrow 0^+$, the observed asymptotic behavior of the empirical density of particles at $T$ is consistent with the theory in \cite{fleming1997asymptotics}.

\subsection{Choice of the initial measure}

The IPM involves the choice of an initial measure $\mu$ for the particles.
The effect of this choice is the strongest on terms in the sum \eqref{eq: numerical approximation of principal eigenvalue} for which $n\Delta t \ll 1$. For example, the term with $n=1$ approximately contributes $\log\pair{\Phi^U_{\Delta t}\mu}{\hP^U_{\Delta t} \1}$ while, by Proposition \ref{prop: discrete stability}, the desired weighted average of $\Delta t \ \hlambda_{\Delta t}$ equals $\log \pair{\Phi^U_{\Delta t} \hmu^{\star}_{U, \Delta t}}{\hP^U_{\Delta t} \1}$.
This suggests that the first terms could lead to an error of $\mathcal{O}(1/T)$ in our approximation if that initial measure $\mu$ is $\mathcal{O}(1)$ away from $\hmu^{\star}_{U, \Delta t}$.\footnote{This is indeed the case in Examples \ref{example: convergence_test_single} and \ref{example: convergence_test_double} below.}
We now introduce two techniques to alleviate this issue.

The first technique is the so-called \emph{burn-in} procedure, in which we altogether drop from the sum the terms with $n\Delta t < t$, and reweigh the sum accordingly. In other words, we choose some $t>0$ (typically a function of $T$) and replace \eqref{eq: numerical approximation of principal eigenvalue} with
\begin{align*}
\hlambda^{\varepsilon, \alpha}_{\Delta t} = \frac{1}{T - t} \sum_{n = \left\lceil \tfrac t{\Delta t}\right\rceil}^{N - 1} \hlambda^{n}.
\end{align*}
This is equivalent to changing $T$ for $T-t$ and $\mu$ for the empirical measure of particles at $t$, which should be closer than $\mathcal{O}(1)$ away from $\hmu^{\star}_{U, \Delta t}$ if $t$ is chosen large enough in view of Proposition \ref{prop: discrete stability} and Section \ref{ssec:empirical-meas}.

The second technique applies when computing $\hlambda^{\varepsilon, \alpha}_{\Delta t}$ at $\varepsilon = \varepsilon_1$ and $\varepsilon = \varepsilon_2$ with $\varepsilon_1 > \varepsilon_2$. It consists in using the final distribution of the particles for the computation at $\varepsilon = \varepsilon_1$ as the initial distribution of the particles for the computation at $\varepsilon = \varepsilon_2$. Recall that, once appropriately rescaled in $\varepsilon$, the logarithm of the invariant density for the respective problems should be close to each other when both $\varepsilon_1$ and $\varepsilon_2$ are small. In particular, peaks in the density should be located at the same key points for both $\varepsilon_1$ and $\varepsilon_2$.
While this fact is symmetric, there is another consideration that does rely on the fact that $\varepsilon_1 > \varepsilon_2$: it is typically the dynamics with the smallest noise that takes the most resources to correct the effect of the poorly chosen initial condition and hence benefits the most from a choice of initial condition that is informed by a previous computation. In addition, we point out that the aforementioned two techniques can be applied simultaneously.

\begin{remark}
The effectiveness of the burn-in procedure relies on the convergence rate of \eqref{eq: stability of Euler scheme} as $k \rightarrow \infty$, and it is more effective for faster convergence. On the other hand, the effectiveness of the second technique relies on the convergence rate of $\varepsilon \log p_{U, \varepsilon}^{\star}$ as $\varepsilon \rightarrow 0^+$ \cite{fleming1997asymptotics}, where $p_{U, \varepsilon}^{\star}$ is the invariant density corresponding to $\varepsilon$ (more discussions on this convergence can be found in Section \ref{subsec: numerical example in vanishing-noise limit}), and it is more effective for faster convergence. These two convergence rates depend closely on $V$ and $b$, but the study of this dependence is beyond the scope of this paper.
\end{remark}

\section{Numerical examples}
\label{sec: numerical examples}

We first focus on exploring the vanishing-noise limit of the principal eigenvalue and the rate function. Then, we perform the convergence tests with respect to the final time $T$ and the time step size $\Delta t$ supporting the convergence of IPM.

\subsection{The principal eigenvalue and the rate function in the vanishing-noise limit}
\label{subsec: numerical example in vanishing-noise limit}

The following computations in this subsection are performed on a high-performance computing cluster with 2 Intel Xeon Gold 6226R (16 Core) CPUs and 96GB RAM. We consider the computation of $\lambda^{\varepsilon, \alpha}$ for certain values of $\varepsilon$ and $\alpha$. In particular, we choose $\varepsilon = 0.1, 0.01, 0.001$. For each fixed $\varepsilon$, we let $\alpha \in \left[-\frac{1}{10}, \frac{11}{10} \right]$ and compute $\hlambda^{\varepsilon, \alpha}_{\Delta t}$ for $\alpha = -\frac{1}{10} + \frac{j}{31} \frac{12}{10}$ with $j = 0, 1, \ldots, 31$. The computation of $\hlambda^{\varepsilon, \alpha}_{\Delta t}$ for each $\varepsilon$ with $32$ different values of $\alpha$ is performed at the same time in parallel on the 32 cores of the CPUs.
For the numerical discretization of our method, we choose $M = 500\,000$ and $\Delta t = 2^{-8}$ in Algorithm \ref{algo: IPM}. Also, unless specified, the initial measure of the particles is chosen to be the standard multivariate Gaussian distribution.

\begin{example}
\label{example: 2D_single_well}
Consider
\begin{gather*}
V^{\textnormal{E1}}(x_1, x_2) = \frac{x_1^2 + x_2^2}{2} + \frac{x_1^4 + x_2^4}{8}, \\
b^{\textnormal{E1}}(x_1, x_2) = \pi^{-1} (\cos(\pi x_1) \sin(\pi x_2), - \sin(\pi x_1) \cos(\pi x_2)).
\end{gather*}
Note that $V^{\textnormal{E1}}$ has a global minimum point at $(0, 0)$ and no other critical points. For $\alpha$ in an open interval containing $[0,1]$, it can be shown \cite{raquepas2020large} that $\lambda^{\varepsilon,\alpha}$ converges as $\varepsilon \to 0^+$ to
\begin{align}
\label{eq:lim-E1}
\lambda^{0, \alpha}
= 1 - \sqrt{1 + 4 \alpha (1 - \alpha)}.
\end{align}
\end{example}

We choose $T = 1024$. We show the numerical eigenvalue $\hlambda^{\varepsilon, \alpha}_{\Delta t}$ in Figure \ref{fig: eigenvalue_2D_single_well}. In addition, the numerical rate function $\widehat{I}^{\varepsilon}_{\Delta t}(s)$ obtained by the Legendre transform of $\hlambda^{\varepsilon, \alpha}_{\Delta t}$ is shown in Figure \ref{fig: rate_func_2D_single_well}. Moreover, the empirical density of particles at $T$ with $\alpha \approx 0.6742$ is shown in Figure \ref{fig: inv_meas_2D_single_well}. It can be seen from Figure \ref{fig: inv_meas_2D_single_well} that the particles get more localized around the global minimum point $(0, 0)$ of $V^{\textnormal{E1}}$ as $\varepsilon \rightarrow 0^+$.

\begin{figure}[ht]
\centering
\begin{subfigure}{0.4\textwidth}
\includegraphics[width=\columnwidth]{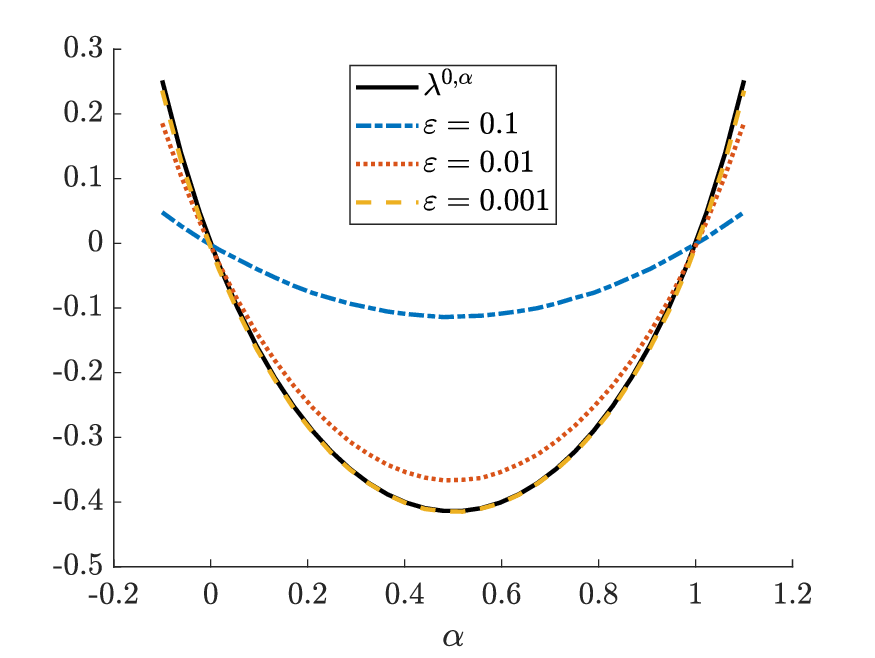}
\caption{$\hlambda^{\varepsilon, \alpha}_{\Delta t}$}
\label{fig: eigenvalue_2D_single_well}
\end{subfigure}%\hfill
\begin{subfigure}{0.4\textwidth}
\includegraphics[width=\columnwidth]{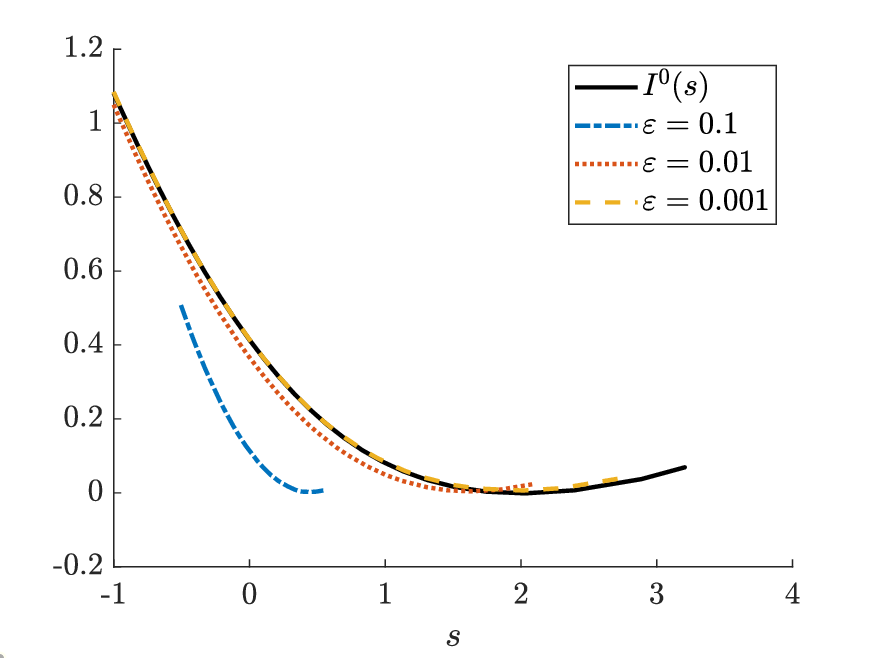}
\caption{$\widehat{I}^{\varepsilon}_{\Delta t}(s)$}
\label{fig: rate_func_2D_single_well}
\end{subfigure}
\caption{In Example \ref{example: 2D_single_well}, we plot our numerical approximation $\hlambda^{\varepsilon,\alpha}_{\Delta t}$ of the principal eigenvalue $\lambda^{\varepsilon,\alpha}$ and the resulting approximation $\widehat{I}^{\varepsilon}_{\Delta t}(s)$ of the rate function $I^\varepsilon(s)$, compared respectively to the limit $\lambda^{0,\alpha}$ in \eqref{eq:lim-E1} and its Legendre transform $I^0(s)$. Note the consistency of the symmetries mentioned in Section \ref{sec: intro}.  Also note that the restriction of $\widehat{I}^{\varepsilon}_{\Delta t}(s)$ to certain values of $s$ is due to our restriction of $\hlambda^{\varepsilon,\alpha}_{\Delta t}$ and how it interacts with the derivatives.}
\end{figure}
\begin{figure}[ht]
\centering
\begin{subfigure}{.33\textwidth}
\includegraphics[width=\columnwidth]{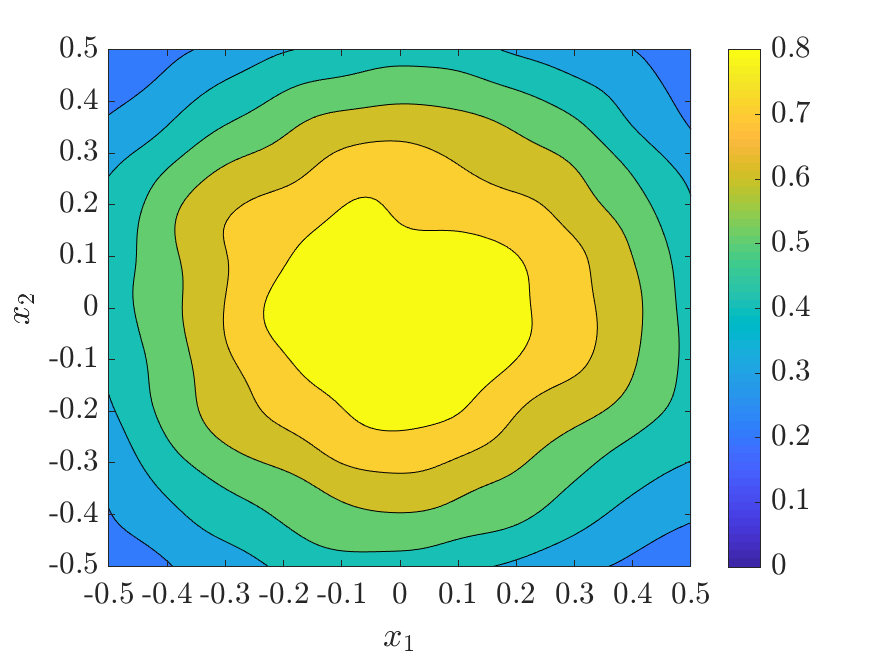}
\caption{$\varepsilon = 0.1$}
\end{subfigure}%\hfill
\begin{subfigure}{.33\textwidth}
\includegraphics[width=\columnwidth]{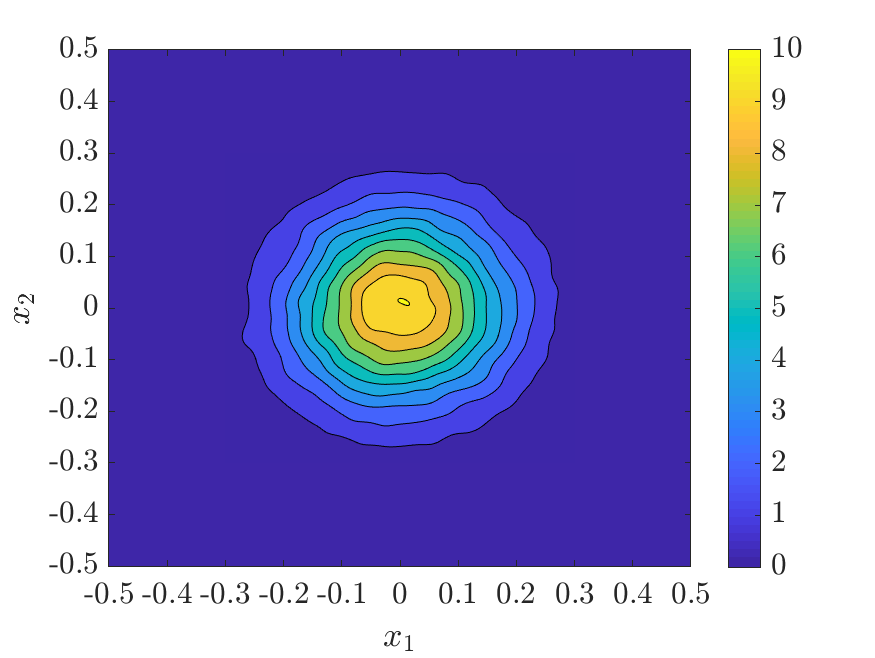}
\caption{$\varepsilon = 0.01$}
\end{subfigure}%\hfill
\begin{subfigure}{.33\textwidth}
\includegraphics[width=\columnwidth]{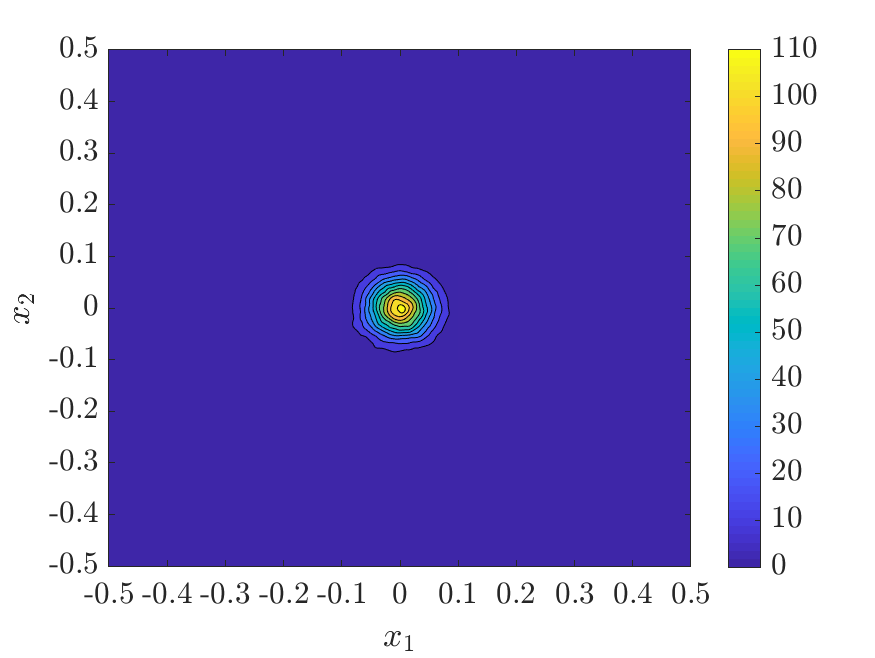}
\caption{$\varepsilon = 0.001$}
\end{subfigure}
\caption{In Example \ref{example: 2D_single_well}, we plot the empirical density of particles at $T$ with $\alpha \approx 0.6742$. {Note the concentration of the mass of the measure around $(0, 0)$ as $\varepsilon$ decreases.}}
\label{fig: inv_meas_2D_single_well}
\end{figure}

\begin{example}
\label{example: 2D_double_well}
    Consider
    \begin{align*}
    V^{\textnormal{E2}}(x_1, x_2; a) = x_1^4 - 2 x_1^2 + (1 + a (x_1 - 1)^2) x_2^2 + x_2^4, \quad b^{\textnormal{E2}}(x_1, x_2) = b^{\textnormal{E1}}(x_1,x_2),
    \end{align*}
    with $a = 0.4$. Note that $V^{\textnormal{E2}}$ has two local minima at $(-1, 0)$ and $(1, 0)$, as well as a saddle point at $(0,0)$.
    For $\alpha$ in an open interval containing $[0,1]$, it can be shown \cite{raquepas2020large} that $\lambda^{\varepsilon,\alpha}$ converges as $\varepsilon \to 0^+$ to
    \begin{align} \label{eq: eigenvalue_2D_double_well}
    {\lambda^{0, \alpha}}
    \widetilde{\lambda}^{\alpha}(a)
    = \max(\lambda^{\alpha}_{+}, \lambda^{\alpha}_{-}),
    \end{align}
    where
    $
    \lambda^{\alpha}_{\pm} = - \Tr X_{\pm}(\alpha) + \frac{1}{2} \Tr D^2 V^{\textnormal{E2}}|_{(\pm 1, 0)},
    $
    with $X_{\pm}(\alpha)$ satisfying the algebraic Riccati equation
    \begin{align*}
    0 &= X_{\pm}(\alpha)^2 - \frac{1 - 2\alpha}{2} (\nabla b|_{(\pm1, 0)}^{\top} X_{\pm}(\alpha) + X_{\pm}(\alpha)^{\top} \nabla b|_{(\pm1, 0)}) \\
    &\qquad\qquad {}- \frac{1}{4} D^2 V|_{(\pm1, 0)} D^2 V|_{(\pm1, 0)} - \alpha(1 - \alpha) \nabla b|_{(\pm1, 0)}^{\top} \nabla b|_{(\pm1, 0)} \\
    &\qquad\qquad {}+ \frac{1}{4}(\nabla b|_{(\pm1, 0)}^{\top} D^2 V|_{(\pm 1, 0)} + D^2 V|_{(\pm 1, 0)} \nabla b|_{(\pm1, 0)})
    \end{align*}
    We have omitted the superscripts ``${\textnormal{E2}}$'' and the parameter ``$a$'' in this last equation to avoid cluttering the notation. This equation involving 2-by-2 matrices is easily solved numerically.
\end{example}

We choose $T = 2048$. We use the burn-in procedure, in which we start computing the eigenvalue from $t = 1024$. We show $\hlambda^{\varepsilon, \alpha}_{\Delta t}$ in Figure \ref{fig: eigenvalue_2D_double_well} and $\widehat{I}^{\varepsilon}_{\Delta t}(s)$ in Figure \ref{fig: rate_func_2D_double_well}. The empirical density of particles at $T$ with $\alpha \approx 0.5968$ is shown in Figure \ref{fig: inv_meas_2D_double_well_alpha_18} and that with $\alpha \approx 1.0613$ is shown in Figure \ref{fig: inv_meas_2D_double_well_alpha_30}. We can see from Figures \ref{fig: inv_meas_2D_double_well_alpha_18} and \ref{fig: inv_meas_2D_double_well_alpha_30} that the particles are localized around different local minimum points of $V^{\textnormal{E2}}$ for different values of $\alpha$.

\begin{figure}[ht]
\centering
\begin{subfigure}{0.4\textwidth}
\includegraphics[width=\columnwidth]{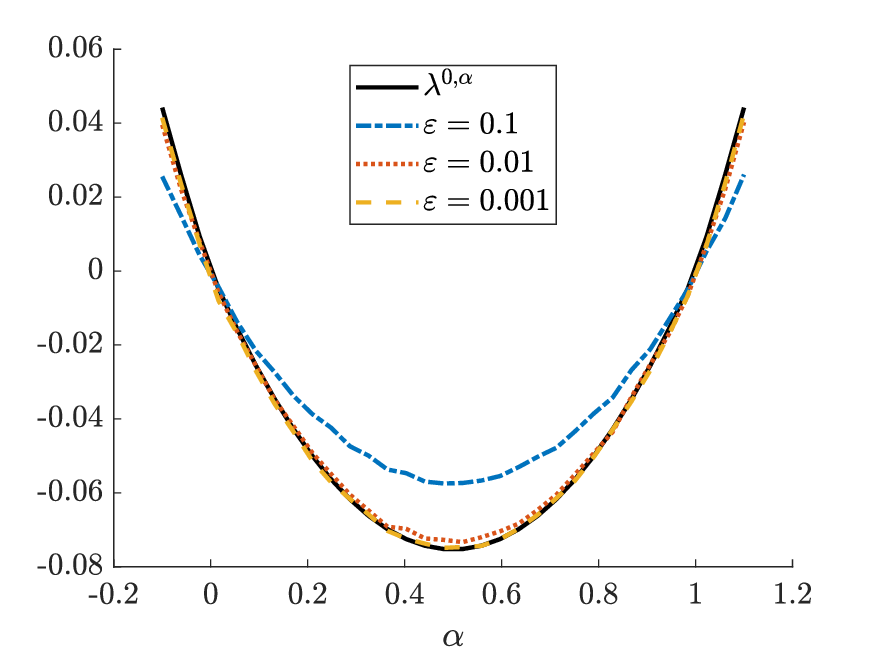}
\caption{$\hlambda^{\varepsilon, \alpha}_{\Delta t}$}
\label{fig: eigenvalue_2D_double_well}
\end{subfigure}%\hfill
\begin{subfigure}{0.4\textwidth}
\includegraphics[width=\columnwidth]{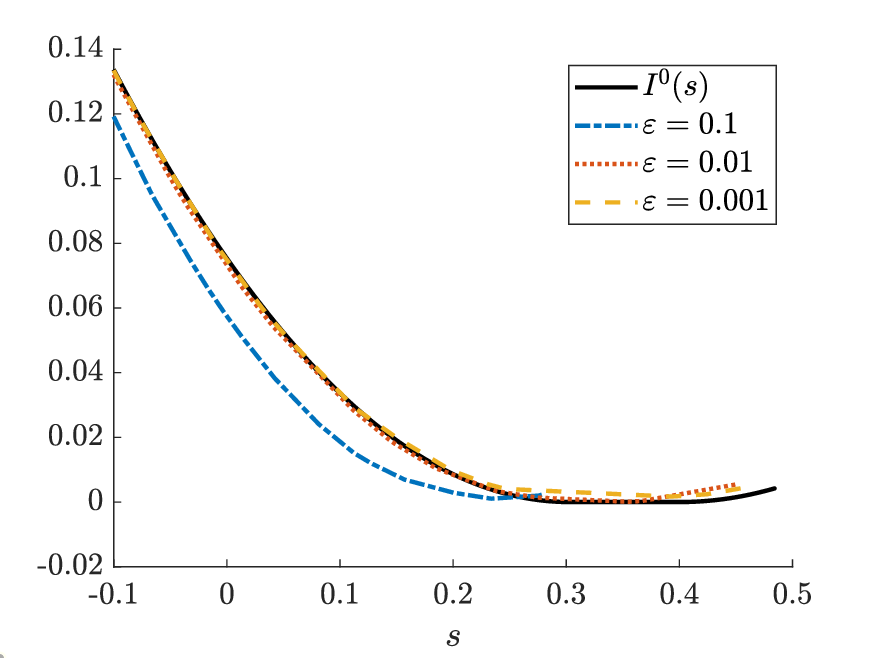}
\caption{$\widehat{I}^{\varepsilon}_{\Delta t}(s)$}
\label{fig: rate_func_2D_double_well}
\end{subfigure}
\caption{In Example \ref{example: 2D_double_well}, we plot our numerical approximation $\hlambda^{\varepsilon,\alpha}_{\Delta t}$ of the principal eigenvalue $\lambda^{\varepsilon,\alpha}$ and the resulting approximation $\widehat{I}^{\varepsilon}_{\Delta t}(s)$ of the rate function $I^\varepsilon(s)$, compared respectively to the limit $\lambda^{0,\alpha}$ in \eqref{eq: eigenvalue_2D_double_well} and its Legendre transform $I^0(s)$. Note that the maximum in \eqref{eq: eigenvalue_2D_double_well} causes a discontinuity of the derivative of the limit of the eigenvalue in $\alpha = 0$ and $\alpha = 1$, in turn causing flat regions in the limit of the rate function.}
\end{figure}
\begin{figure}[ht]
\centering
\begin{subfigure}{.33\textwidth}
\includegraphics[width=\columnwidth]{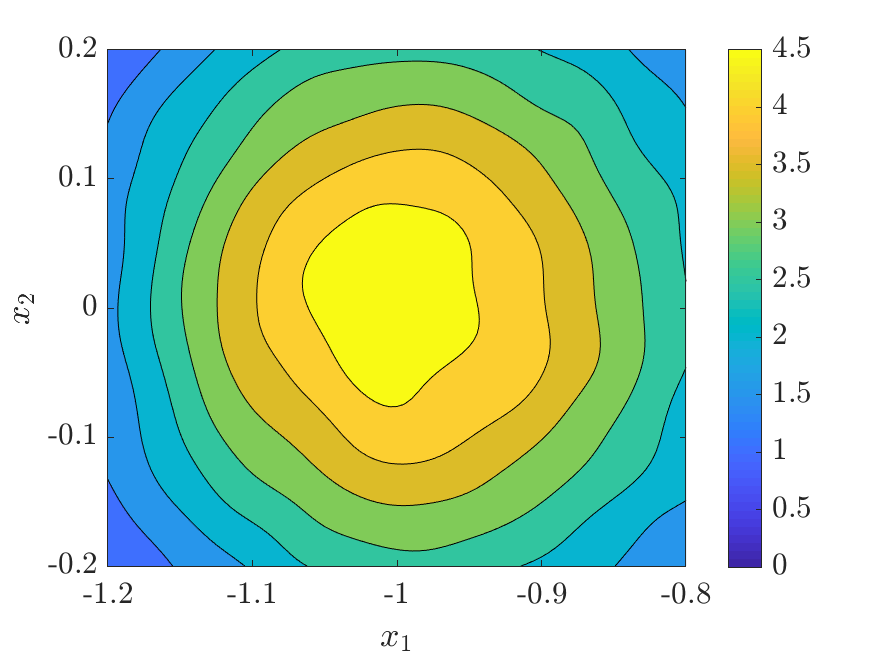}
\caption{$\varepsilon = 0.1$}
\end{subfigure}%\hfill
\begin{subfigure}{.33\textwidth}
\includegraphics[width=\columnwidth]{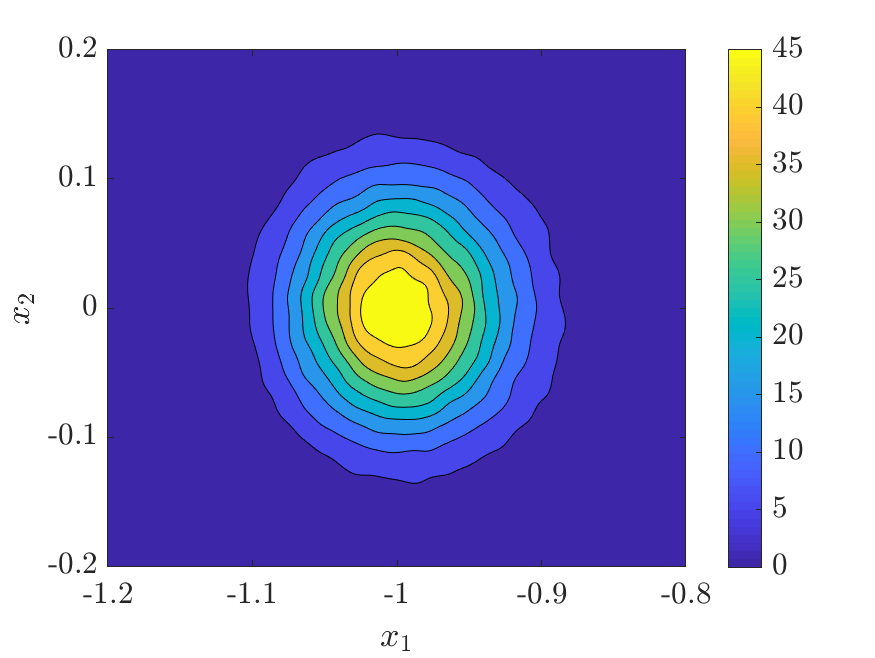}
\caption{$\varepsilon = 0.01$}
\end{subfigure}%\hfill
\begin{subfigure}{.33\textwidth}
\includegraphics[width=\columnwidth]{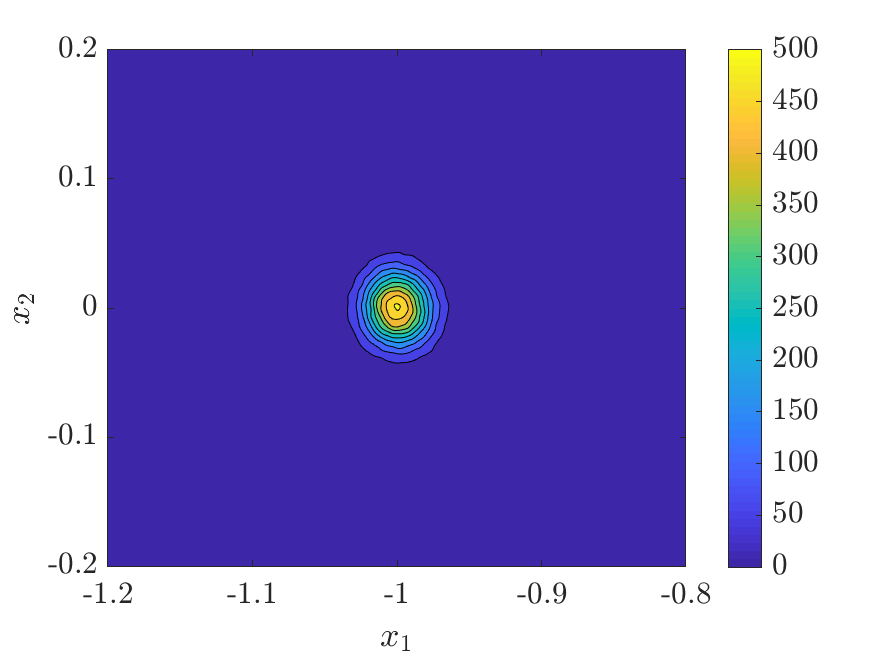}
\caption{$\varepsilon = 0.001$}
\end{subfigure}
\caption{In Example \ref{example: 2D_double_well}, we plot the empirical density of particles at $T$ with $\alpha \approx 0.5968$. Note the concentration of the mass of the measure around $(-1, 0)$ as $\varepsilon$ decreases; no mass could be observed near $(1,0)$ at $\varepsilon = 0.001$.}
\label{fig: inv_meas_2D_double_well_alpha_18}
\end{figure}
\begin{figure}[ht]
\centering
\begin{subfigure}{.33\textwidth}
\includegraphics[width=\columnwidth]{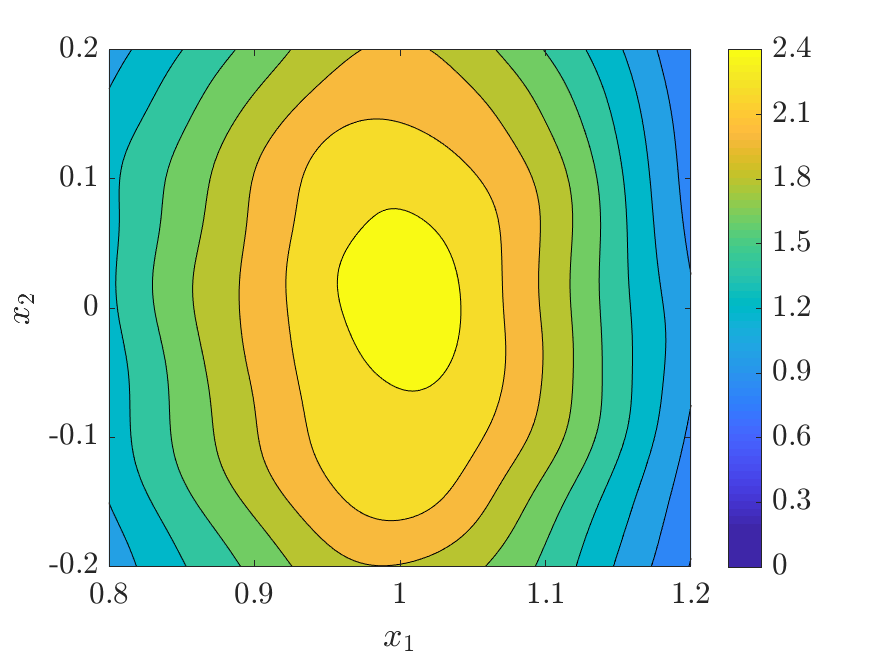}
\caption{$\varepsilon = 0.1$}
\end{subfigure}%\hfill
\begin{subfigure}{.33\textwidth}
\includegraphics[width=\columnwidth]{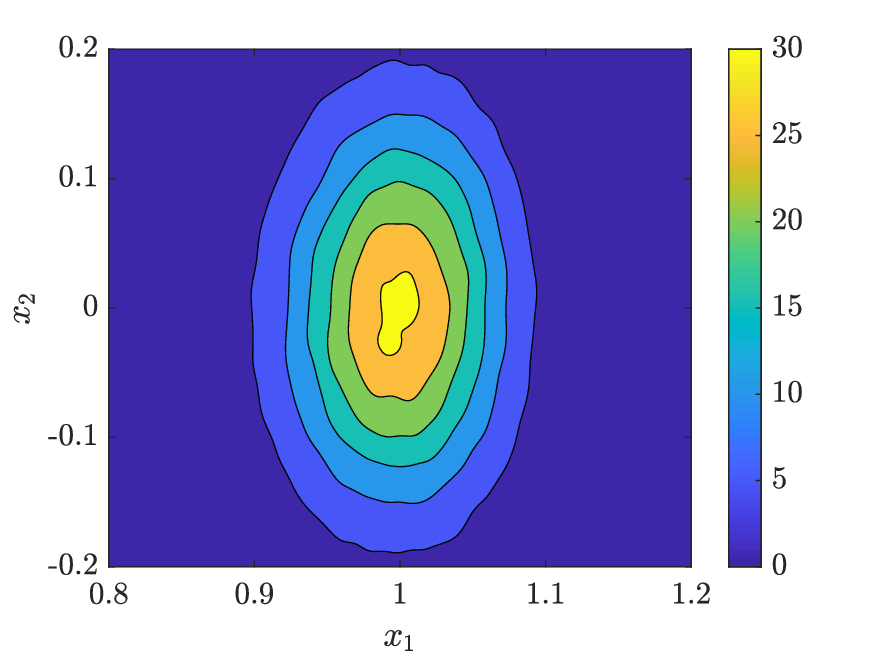}
\caption{$\varepsilon = 0.01$}
\end{subfigure}%\hfill
\begin{subfigure}{.33\textwidth}
\includegraphics[width=\columnwidth]{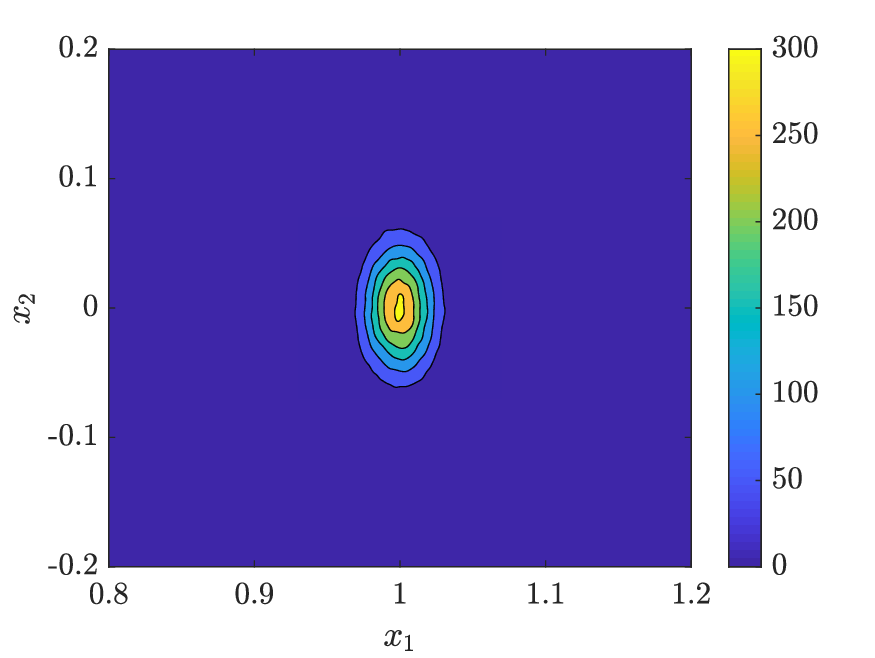}
\caption{$\varepsilon = 0.001$}
\end{subfigure}
\caption{In Example \ref{example: 2D_double_well}, we plot the empirical density of particles at $T$ with $\alpha \approx 1.0613$. Note the concentration of the mass of the measure around $(1, 0)$ as $\varepsilon$ decreases; no mass could be observed near $(-1,0)$ at $\varepsilon = 0.001$.}
\label{fig: inv_meas_2D_double_well_alpha_30}
\end{figure}

\begin{remark}
The potential $V^{\textnormal{E2}}$ has two local minima, so the invariant density might be bimodal when $\varepsilon$ is moderately large, with the mass of the measure being concentrated around the two different local minimum points of $V^{\textnormal{E2}}$. The empirical density obtained by our method captures this feature; see Figure \ref{fig: inv_meas_2D_double_well_alpha_30_bimodal}. This shows that our IPM can accurately capture the shape of multimodal invariant measures, which is known to be difficult for some sampling methods, e.g., MCMC methods.
\end{remark}

\begin{figure}[ht]
  \centering
  \includegraphics[width=0.4\textwidth]{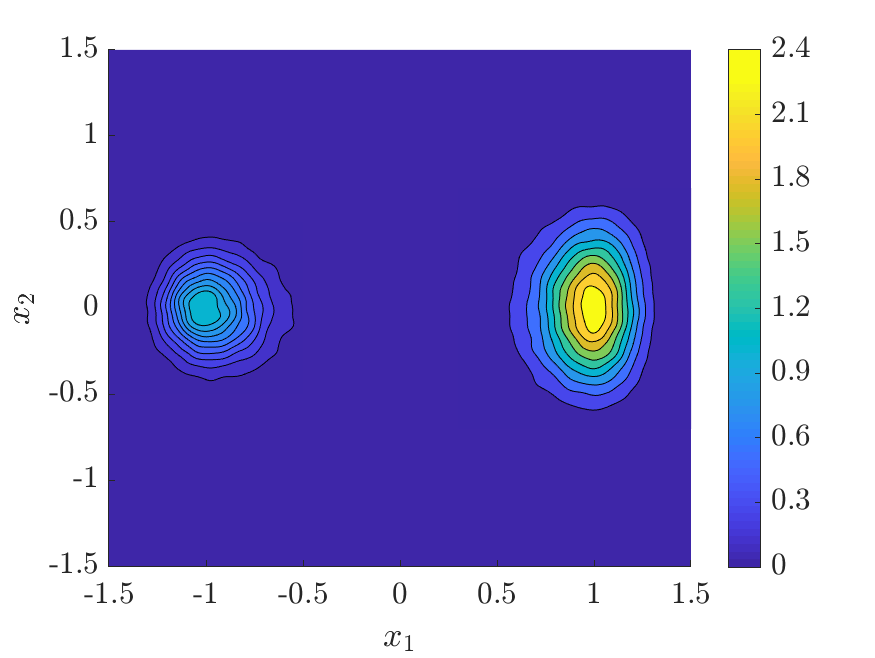}
  \caption{In Example \ref{example: 2D_double_well}, we plot the empirical density of particles at $T$ for $\varepsilon = 0.1$ with $\alpha \approx 1.0613$. Note that the density is bimodal, with the mass of measure being concentrated around $(-1, 0)$ and $(1, 0)$, the two local minimum points of $V^\textnormal{E2}$.}
  \label{fig: inv_meas_2D_double_well_alpha_30_bimodal}
\end{figure}

\begin{example}
\label{example: 16D_coupled}
Consider
\begin{align*}
V^\textnormal{E3}(x_1, \ldots, x_{16}) = \frac{1}{2} x^{\top} M x + 4 |x|^4, \quad b^\textnormal{E3}(x_1, \ldots, x_{16}) = \eta(|x|) B x,
\end{align*}
where $x = (x_1, \ldots, x_{16})^{\top} \in \mathbb{R}^{16}$, $\eta$ is a smooth cut-off function with $\eta(r) = 1$ for $r \leq 1$ and $\eta(r) = 0$ for $r \geq 2$,
\begin{align*}
M = 
\begin{pmatrix}
 5 & 0 & \dotsb & 0\\
 0 & 6 & \dotsb & 0 \\
 \vdots & \vdots & \ddots & \vdots \\
 0 & 0 & \dotsb & 20
\end{pmatrix}
 \in \mathbb{R}^{16 \times 16}, \quad
B = Q^{\top} 
\begin{pmatrix}
 B_1 & 0 & \dotsb & 0 \\
 0 & B_2 & \dotsb & 0 \\
 \vdots &\vdots & \ddots &\vdots \\
 0 & 0 & \dotsb & B_8
\end{pmatrix}
 Q \in \mathbb{R}^{16 \times 16}, 
\end{align*}
with
\begin{align*}
B_k = 
\begin{pmatrix}
 0 & 1 \\
 -1 & 0 \\
\end{pmatrix}
, \quad k = 1, \ldots, 8,
\end{align*}
and $Q \in \mathbb{R}^{16 \times 16}$ a random orthogonal matrix sampled using the method in \cite{mezzadri2006generate}. With this choice, $B$ does not have any particular block structure, there is no obvious commutation relation between $M$ and $B$, and all 16 coordinates, $x_1, \dotsc, x_{16},$ are genuinely coupled. Note that $V^\textnormal{E3}$ has a global minimum point at the origin and no other critical points. For $\alpha$ in an open interval containing $[0,1]$, it can be shown \cite{raquepas2020large} that $\lambda^{\varepsilon,\alpha}$ converges as $\varepsilon \to 0^+$ to
\begin{align} \label{eq: eigenvalue_16D_coupled}
\lambda^{0, \alpha} = -\Tr X(\alpha) + \frac{1}{2} \Tr M - \alpha \Tr B,
\end{align}
where $X(\alpha)$ is the maximal solution to
\begin{align*}
X(\alpha)^\top X(\alpha) - \frac{1}{2} (1-2\alpha) (B^{\top} X(\alpha) + X(\alpha)^{\top} B) - \frac{1}{4} M^\top M + \frac{1}{4} (B^\top M + M^\top B) - \alpha(1-\alpha) B^\top B = 0.
\end{align*}
The above equation can be easily solved numerically.
\end{example}

We choose $T = 2048$. We use the burn-in procedure for $\varepsilon = 0.1, 0.01$, in which we start computing the eigenvalue from $t = 1024$. For $\varepsilon = 0.001$, we use the empirical measure of particles at $T$ obtained at $\varepsilon = 0.01$ as the initial measure. We show $\hlambda^{\varepsilon, \alpha}_{\Delta t}$ in Figure \ref{fig: eigenvalue_16D_coupled} and $\widehat{I}^{\varepsilon}_{\Delta t}(s)$ in Figure \ref{fig: rate_func_16D_coupled}. The 2-dimensional marginal empirical density of $(x_{8}, x_{9})$ of particles at $T$ with $\alpha \approx 0.3258$ are shown in Figure \ref{fig: inv_meas_16D_coupled}. It can be seen from Figure \ref{fig: inv_meas_16D_coupled} that the particles get more localized around the origin as $\varepsilon \rightarrow 0^+$.

\begin{figure}[ht]
\centering
\begin{subfigure}{0.4\textwidth}
\includegraphics[width=\columnwidth]{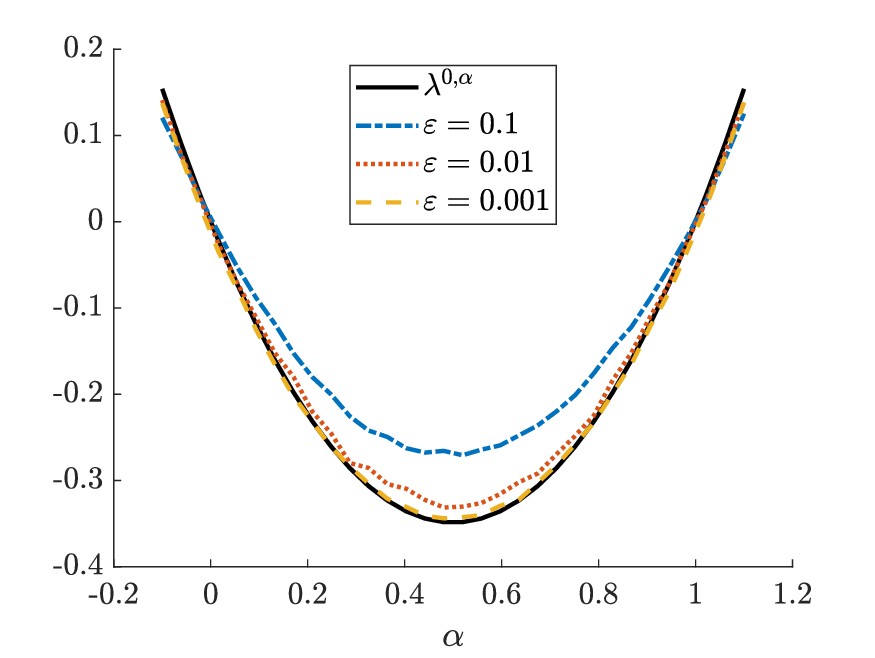}
\caption{$\hlambda^{\varepsilon, \alpha}_{\Delta t}$}
\label{fig: eigenvalue_16D_coupled}
\end{subfigure}%\hfill
\begin{subfigure}{0.4\textwidth}
\includegraphics[width=\columnwidth]{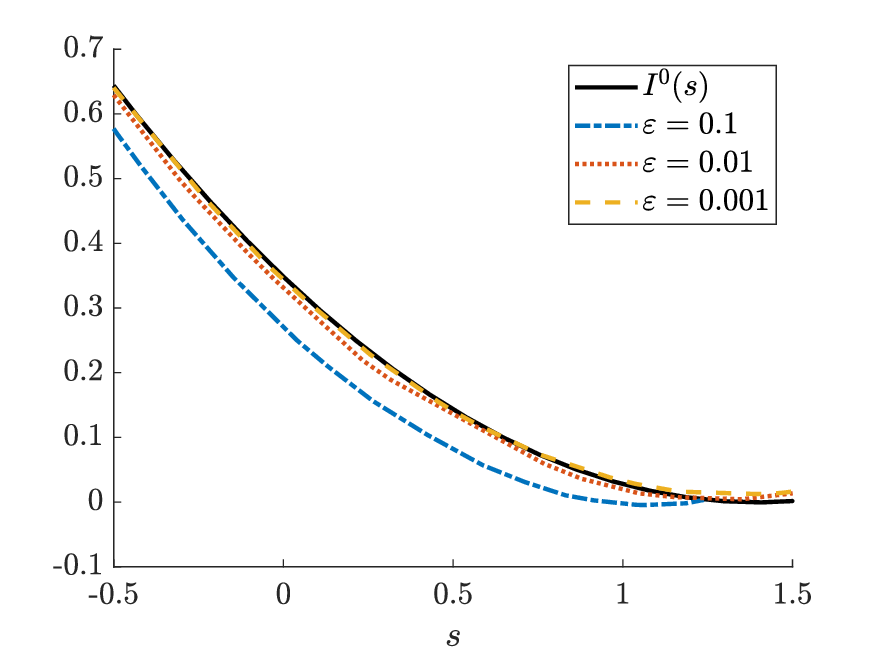}
\caption{$\widehat{I}^{\varepsilon}_{\Delta t}(s)$}
\label{fig: rate_func_16D_coupled}
\end{subfigure}
\caption{In Example \ref{example: 16D_coupled}, we plot our numerical approximation $\hlambda^{\varepsilon,\alpha}_{\Delta t}$ of the principal eigenvalue $\lambda^{\varepsilon,\alpha}$ and the resulting approximation $\widehat{I}^{\varepsilon}_{\Delta t}(s)$ of the rate function $I^\varepsilon(s)$, compared respectively to the limit $\lambda^{0,\alpha}$ in \eqref{eq: eigenvalue_16D_coupled} and its Legendre transform $I^0(s)$.}
\end{figure}

\begin{figure}[ht]
\centering
\begin{subfigure}{.33\textwidth}
\includegraphics[width=\columnwidth]{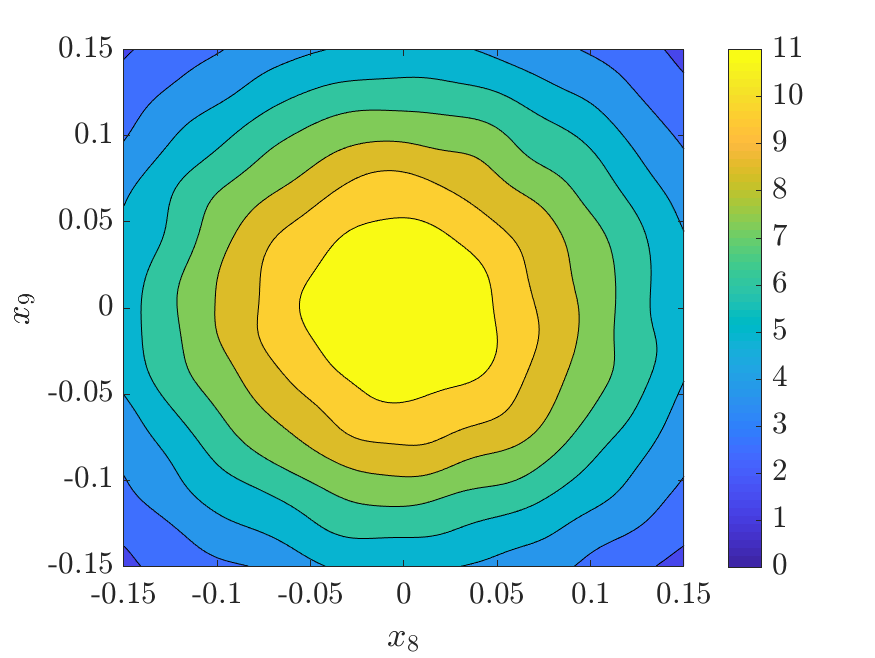}
\caption{$\varepsilon = 0.1$}
\end{subfigure}%\hfill
\begin{subfigure}{.33\textwidth}
\includegraphics[width=\columnwidth]{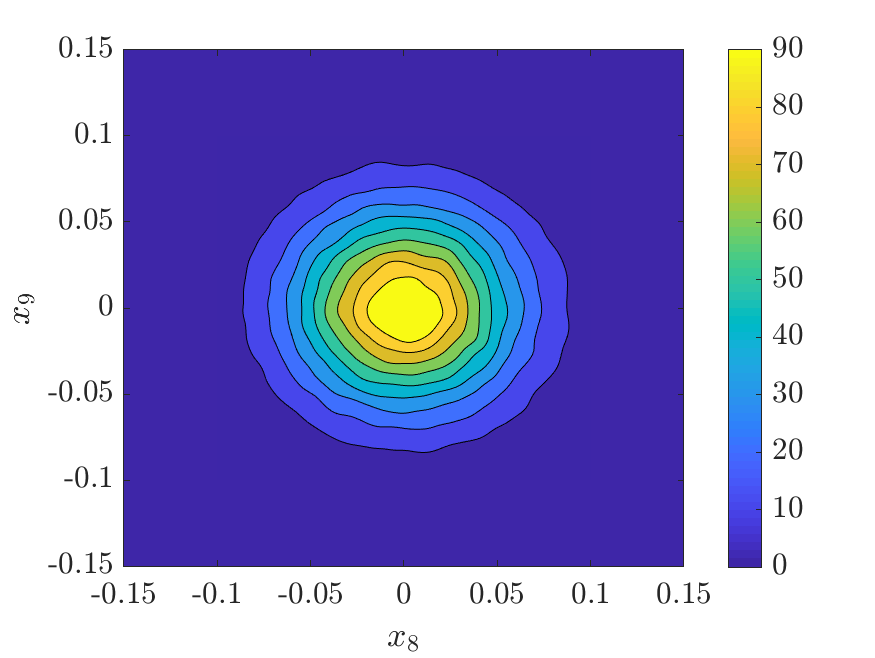}
\caption{$\varepsilon = 0.01$}
\end{subfigure}%\hfill
\begin{subfigure}{.33\textwidth}
\includegraphics[width=\columnwidth]{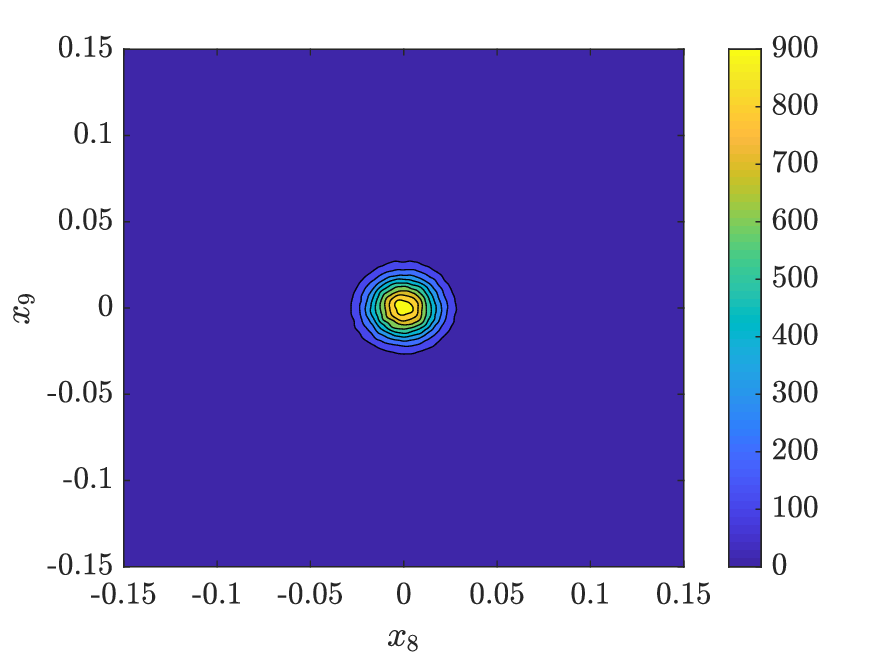}
\caption{$\varepsilon = 0.001$}
\end{subfigure}
\caption{In Example \ref{example: 16D_coupled}, we plot the 2-dimensional marginal empirical density of $(x_{8}, x_{9})$ of particles at $T$ with $\alpha \approx 0.3258$. Note the concentration of the mass of the measure around $(0, 0)$ as $\varepsilon$ decreases.}
\label{fig: inv_meas_16D_coupled}
\end{figure}

From the above examples of both small and large $d$, we can observe within visual tolerance the convergence of both the numerical principal eigenvalue $\hlambda^{\varepsilon, \alpha}_{\Delta t}$ and the numerical rate function $\widehat{I}^{\varepsilon}_{\Delta t}(s)$ to their respective analytical vanishing-noise limits $\lambda^{0,\alpha}$ and $I^0(s)$, with a fixed number of particles and a fixed time step size. Furthermore, the maximum of the 2-dimensional (marginal) empirical density of particles at $T$ is approximately proportional to $\varepsilon^{-1}$ for small $\varepsilon$.
We know that the invariant density $p^{\star}_{U, \varepsilon}$ of $\mu^{\star}_{U,\varepsilon}$ is the (suitably normalized) principal eigenfunction $\psi^{\varepsilon, \alpha}$ for the dual of the operator $\cL^{\varepsilon, \alpha} + U^{\varepsilon, \alpha}$. Under certain additional conditions, the study in \cite{fleming1997asymptotics} shows that $\varepsilon \log \psi^{\varepsilon, \alpha}$ has a non-trivial limit as $\varepsilon \rightarrow 0^+$: the density is asymptotically proportional to $\exp(-\varepsilon^{-1} \Phi)$ for some function $\Phi$, with a normalizing constant that is asymptotically $\mathcal{O}(\varepsilon^{- d / 2})$. Hence, the observed asymptotic behavior of the empirical density of particles at $T$ as $\varepsilon \rightarrow 0^+$ is consistent with the theory in \cite{fleming1997asymptotics}.

We end this subsection with a discussion on an example where the assumptions \eqref{eq:ND-on-V}--\eqref{eq:ND-on-b-vs-V} fail, preventing us from appealing to the proof of \cite{raquepas2020large} for convergence in the limit $\varepsilon \to 0^+$. In such situations, it is possible for $\lambda^{\varepsilon, \alpha}$ to diverge as $\varepsilon \rightarrow 0^+$.

\begin{example} \label{example: 2D_circle}
Consider
\begin{gather*}
V^{\textnormal{E4}}(x_1, x_2) = - \frac{x_1^2 + x_2^2}{4} + \frac{x_1^4 + 2 x_1^2 x_2^2 + x_2^4}{8}, \\
b^{\textnormal{E4}}(x_1, x_2) = (\cos(x_1) \sin(x_2), - \sin(x_1) \cos(x_2)).
\end{gather*}
Note that the $\nabla V^{\textnormal{E4}}(x) = 0$ for all $x$ on the circle $\{(x_1, x_2): x_1^2 + x_2^2=1\}$, whereas $b^{\textnormal{E4}}$ acts non-trivially along that circle. In Figures \ref{fig: eigenvalue_zoom_in_2D_circle} and \ref{fig: eigenvalue_rescaled_2D_circle}, we see that the eigenvalue is of different orders in $\varepsilon$ depending on whether $\alpha\in[0,1]$ or $\alpha\notin[0,1]$.
\end{example}

\begin{figure}[ht]
\centering
\begin{subfigure}{0.4\textwidth}
\includegraphics[width=\columnwidth]{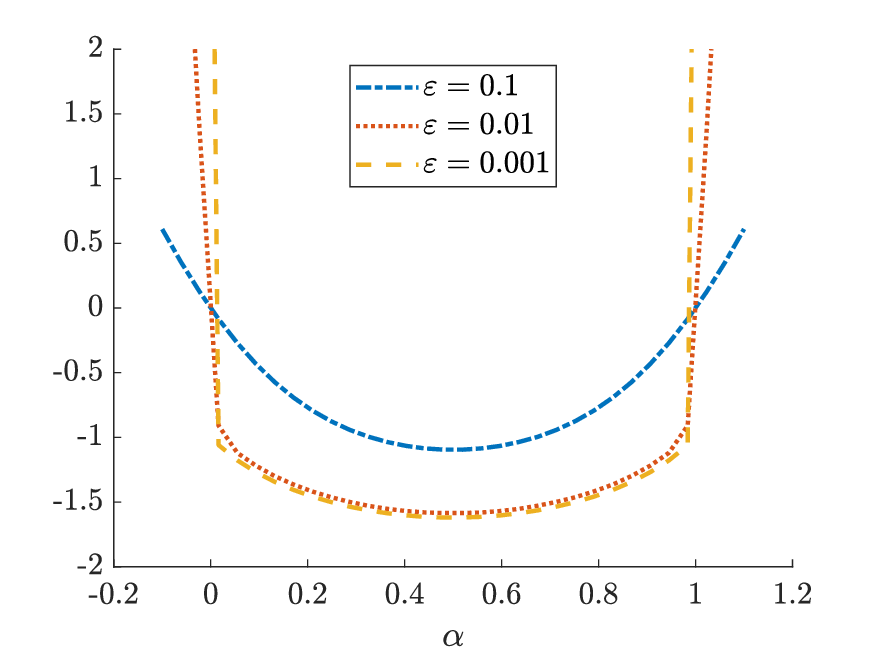}
\caption{$\hlambda^{\varepsilon, \alpha}_{\Delta t}$}
\label{fig: eigenvalue_zoom_in_2D_circle}
\end{subfigure}%\hfill
\begin{subfigure}{0.4\textwidth}
\includegraphics[width=\columnwidth]{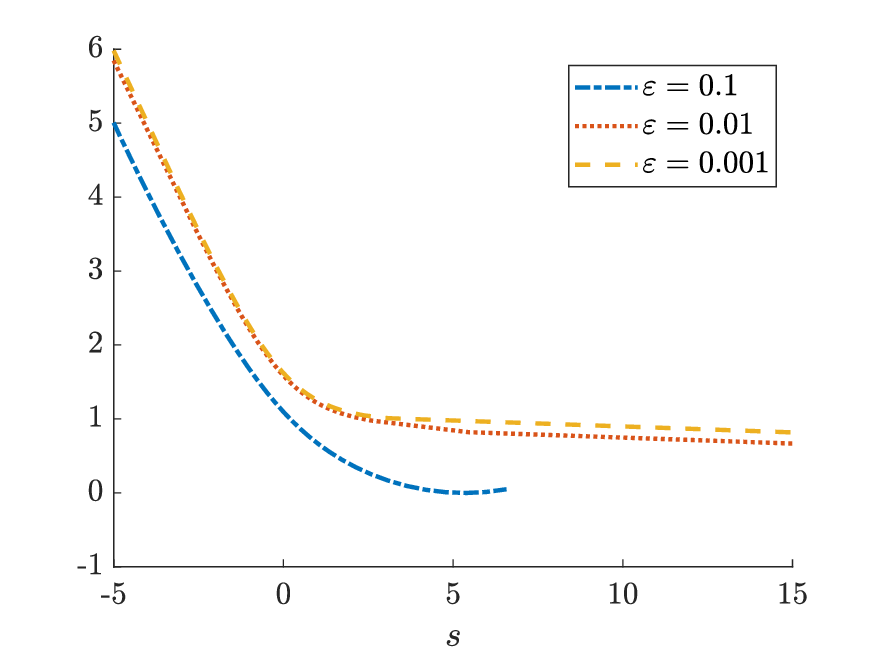}
\caption{$\widehat{I}^{\varepsilon}_{\Delta t}(s)$}
\label{fig: rate_func_zoom_in_2D_circle}
\end{subfigure}
\caption{In Example \ref{example: 2D_circle}, we plot our numerical approximation $\hlambda^{\varepsilon,\alpha}_{\Delta t}$ of the principal eigenvalue $\lambda^{\varepsilon,\alpha}$ and the resulting approximation $\widehat{I}^{\varepsilon}_{\Delta t}(s)$ of the rate function $I^\varepsilon(s)$.}
\end{figure}
\begin{figure}[ht]
\centering
\begin{subfigure}{0.4\textwidth}
\includegraphics[width=\columnwidth]{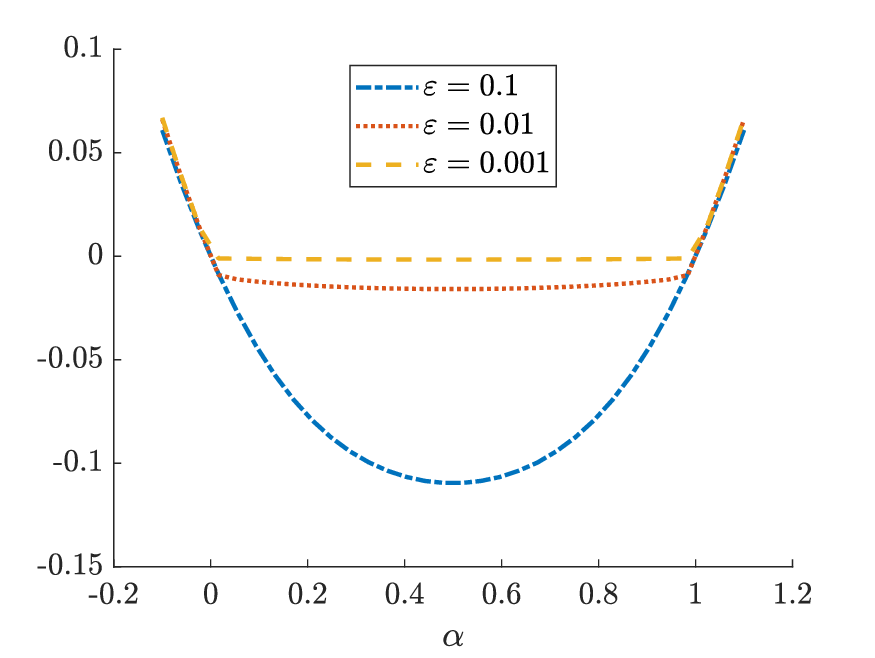}
\caption{$\varepsilon \hlambda^{\varepsilon, \alpha}_{\Delta t}$}
\label{fig: eigenvalue_rescaled_2D_circle}
\end{subfigure}%\hfill
\begin{subfigure}{0.4\textwidth}
\includegraphics[width=\columnwidth]{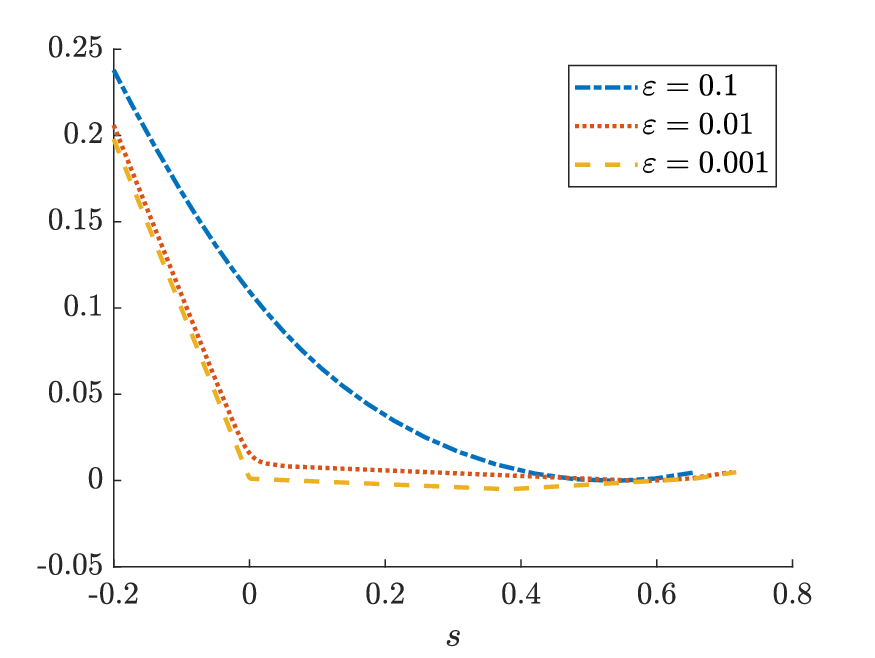}
\caption{$\varepsilon\widehat{I}^{\varepsilon}_{\Delta t}(\varepsilon^{-1}s)$}
\label{fig: rate_func_rescaled_2D_circle}
\end{subfigure}
\caption{In Example \ref{example: 2D_circle}, we plot our numerical approximation $\varepsilon \hlambda^{\varepsilon,\alpha}_{\Delta t}$ of the rescaled principal eigenvalue $\varepsilon \lambda^{\varepsilon,\alpha}$ and the resulting approximation $\varepsilon\widehat{I}^{\varepsilon}_{\Delta t}(\varepsilon^{-1}s)$ of the rescaled rate function $\varepsilon I^{\varepsilon}(\varepsilon^{-1}s)$.}
\end{figure}

For this example, we use $T = 1024$. We show $\hlambda^{\varepsilon, \alpha}_{\Delta t}$ in Figure \ref{fig: eigenvalue_zoom_in_2D_circle} and its Legendre transform $\widehat{I}^{\varepsilon}_{\Delta t}(s)$ in Figure \ref{fig: rate_func_zoom_in_2D_circle}. In particular, the zero of $\widehat{I}^{\varepsilon}_{\Delta t}$\,---\, which is the mean entropy production rate for that value of $\varepsilon$\,---\, seems to diverge as $\varepsilon \to 0^+$, as expected due to the inverse power of $\varepsilon$ in the definition \eqref{eq:def-Seps} of the entropy production and the periodic orbit of the deterministic dynamics along which the work done by $b$ per unit time is non-zero.
We also show $\varepsilon\hlambda^{\varepsilon, \alpha}_{\Delta t}$ in Figure \ref{fig: eigenvalue_rescaled_2D_circle} and its Legendre transform $\varepsilon\widehat{I}^{\varepsilon}_{\Delta t}(\varepsilon^{-1}s)$ in Figure \ref{fig: rate_func_rescaled_2D_circle}, as studied in \cite{BDG15,BGL22}. In particular, a key feature discussed in \cite[Section 5]{BDG15} is emerging as $\varepsilon \to 0^+$: a kink in $\varepsilon\widehat{I}^{\varepsilon}_{\Delta t}(\varepsilon^{-1}s)$ at $s=0$, where two flat regions meet at an angle compatible with the Gallavotti--Cohen symmetry. {The example also confirms that in some (but not all) scenarios, the limits of $I^\varepsilon$ and $\varepsilon I^{\varepsilon}(\varepsilon^{-1}\,\cdot\,)$ provide complementary, non-trivial information on the fluctuations of $S^\varepsilon_t$.}

\subsection{Convergence tests}

Recall that our numerical discretization using the operator splitting scheme and the Euler--Maruyama scheme converges with respect to the final time $T$ as shown in Proposition \ref{prop: discrete stability}, and that it also converges with respect to the time step size $\Delta t$ as shown in Theorem \ref{thm: convergence of IPM wrt time step size}. In this subsection, we consider the ensuing IPM on two examples with a quadratic potential and a linear drift, both of which admit explicit theoretical expressions for the principal eigenvalue $\lambda^{\varepsilon, \alpha}$ that are independent of $\varepsilon$. We perform convergence tests with respect to $T$ and $\Delta t$, respectively, by comparing $| \hlambda^{\varepsilon,\alpha}_{\Delta t} - \lambda^{\varepsilon,\alpha}|$. We also test the effectiveness of the burn-in procedure with these two examples. The computation in this subsection is performed on a high-performance computing cluster with an Intel Xeon Gold 6226R (16 Core) CPU and 3GB RAM.

\begin{example}
\label{example: convergence_test_single}
    Consider
    \begin{align*}
    V^{\textnormal{LE1}}(x_1, x_2) = \frac{x_1^2 + x_2^2}{2}, \quad b^{\textnormal{LE1}}(x_1, x_2) = (x_2, - x_1).
    \end{align*}
    This is the linearized version about $(0, 0)$ of Example \ref{example: 2D_single_well}. Following \cite{JPS17,raquepas2020large}, there is an open interval of values of $\alpha$ that contains $[0,1]$ and for which, for every $\varepsilon > 0$,
    \begin{align}
    \lambda^{\varepsilon, \alpha} = 1 - \sqrt{1 + 4 \alpha (1 - \alpha)}.
    \end{align}
\end{example}

\begin{example}
\label{example: convergence_test_double}
    Consider
    \begin{align*}
    V^\textnormal{LE2}(x_1, x_2) = -1 + 4(x_1-1)^2 + x_2^2, \quad b^\textnormal{LE2}(x_1, x_2) = (-x_2, x_1 - 1).
    \end{align*}
    This is the linearized version about $(1,0)$ of Example \ref{example: 2D_double_well} with $a=1$. Following \cite{JPS17,raquepas2020large}, there is an open interval of values of $\alpha$ that contains $[0,1]$ and for which, for every $\varepsilon > 0$,
    \begin{align}
    \lambda^{\varepsilon, \alpha} = \lambda_{+}^{\alpha},
    \end{align}
    where $\lambda_{+}^{\alpha}$ is given in \eqref{eq: eigenvalue_2D_double_well}. Note that $\lambda_{+}^{\alpha}$ is actually independent of $a$.
\end{example}

\begin{figure}[ht]
\centering
\begin{subfigure}[t]{0.31\textwidth}
\includegraphics[width=\columnwidth]{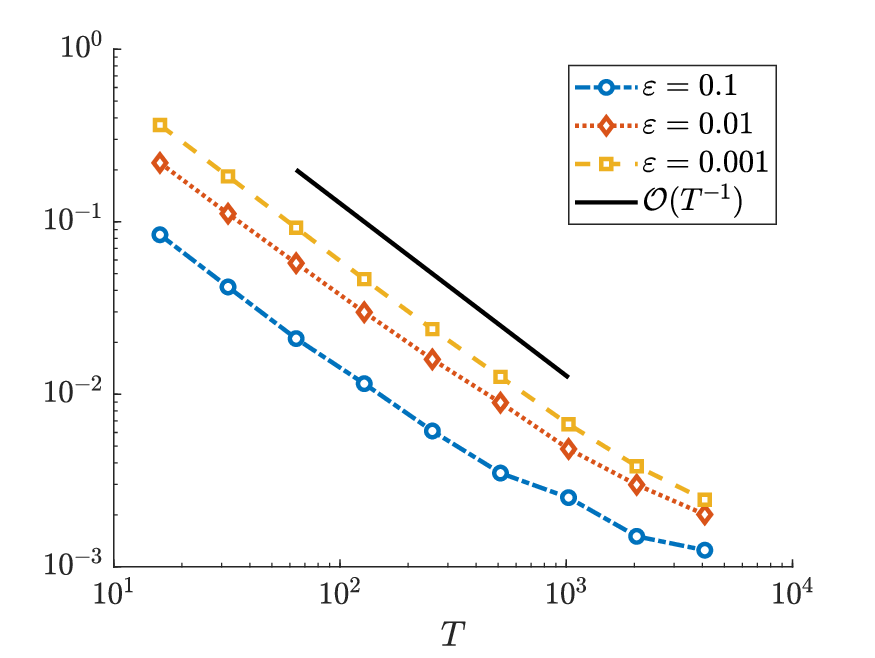}
\caption{Convergence test with respect to $T$ using $\Delta t = 2^{-7}$.}
\label{fig: convergence_test_single_T}
\end{subfigure}\hfill
\begin{subfigure}[t]{0.31\textwidth}
\includegraphics[width=\columnwidth]{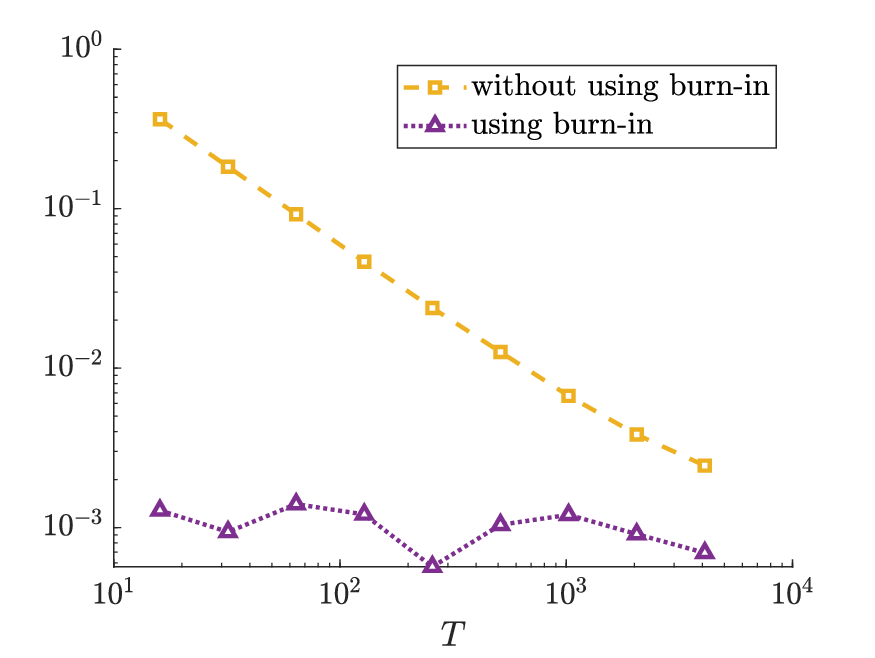}
\caption{Testing the effect of the burn-in procedure from $T/2$ on the convergence with respect to $T$, using $\varepsilon = 0.001$ and $\Delta t = 2^{-7}$.}
\label{fig: convergence_test_single_T_burn_in}
\end{subfigure}\hfill
\begin{subfigure}[t]{0.31\textwidth}
\includegraphics[width=\columnwidth]{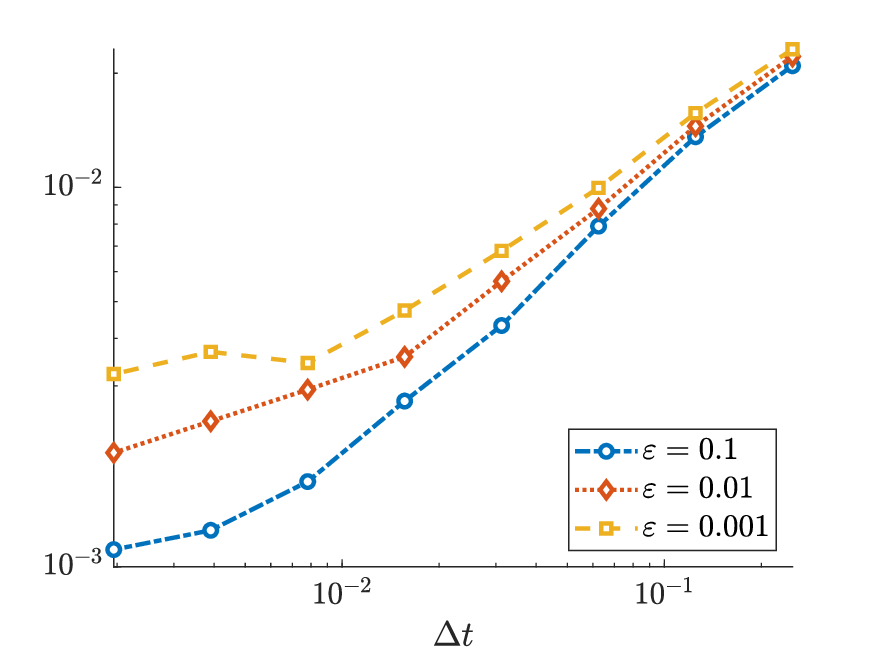}
\caption{Convergence test with respect to $\Delta t$ using $T = 2^{11}$.}
\label{fig: convergence_test_single_delta_t}
\end{subfigure}
\caption{In Example \ref{example: convergence_test_single}, we plot the error $| \hlambda^{\varepsilon,\alpha}_{\Delta t} - \lambda^{\varepsilon,\alpha}|$ at $\alpha = 0.25$ against $T$ and $\Delta t$ respectively, using $M = 500\,000$.}
\label{fig: convergence_test_single}
\end{figure}

\begin{figure}[ht]
\centering
\begin{subfigure}[t]{0.31\textwidth}
\includegraphics[width=\columnwidth]{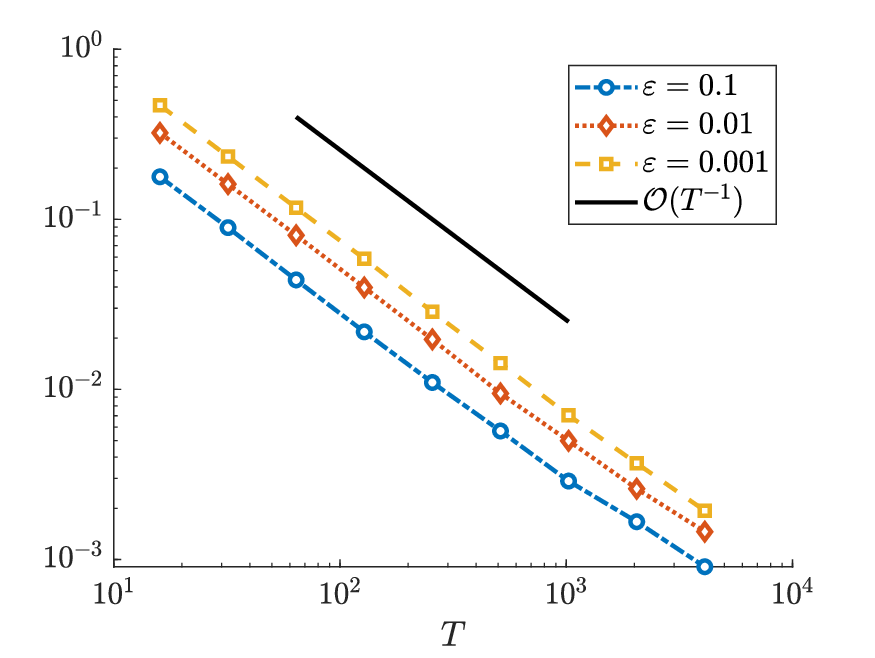}
\caption{Convergence test with respect to $T$ using $\Delta t = 2^{-7}$.}
\label{fig: convergence_test_double_T}
\end{subfigure}\hfill
\begin{subfigure}[t]{0.31\textwidth}
\includegraphics[width=\columnwidth]{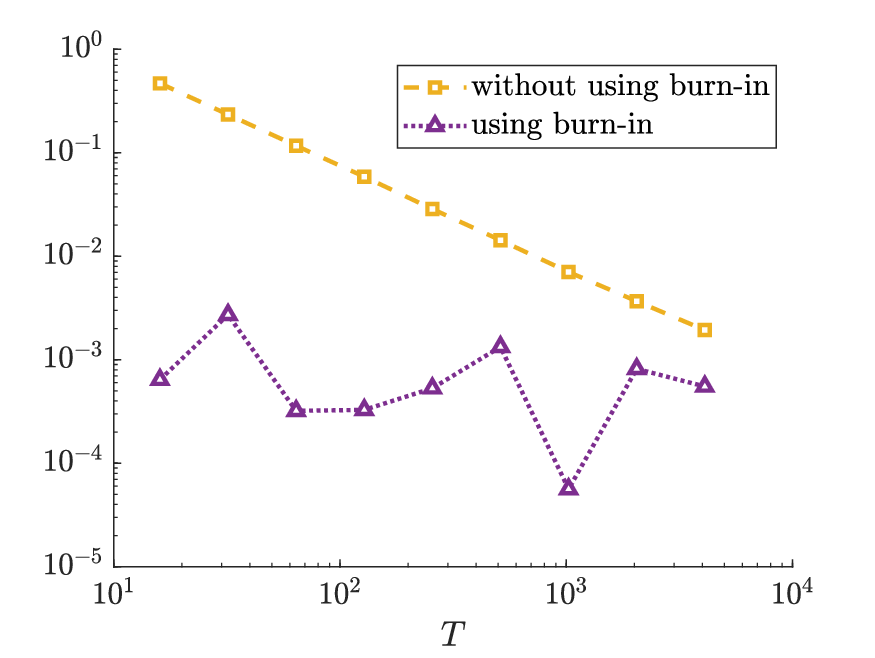}
\caption{Testing the effect of the burn-in procedure from $T/2$ on the convergence with respect to $T$, using $\varepsilon = 0.001$ and $\Delta t = 2^{-7}$.}
\label{fig: convergence_test_double_T_burn_in}
\end{subfigure}\hfill
\begin{subfigure}[t]{0.31\textwidth}
\includegraphics[width=\columnwidth]{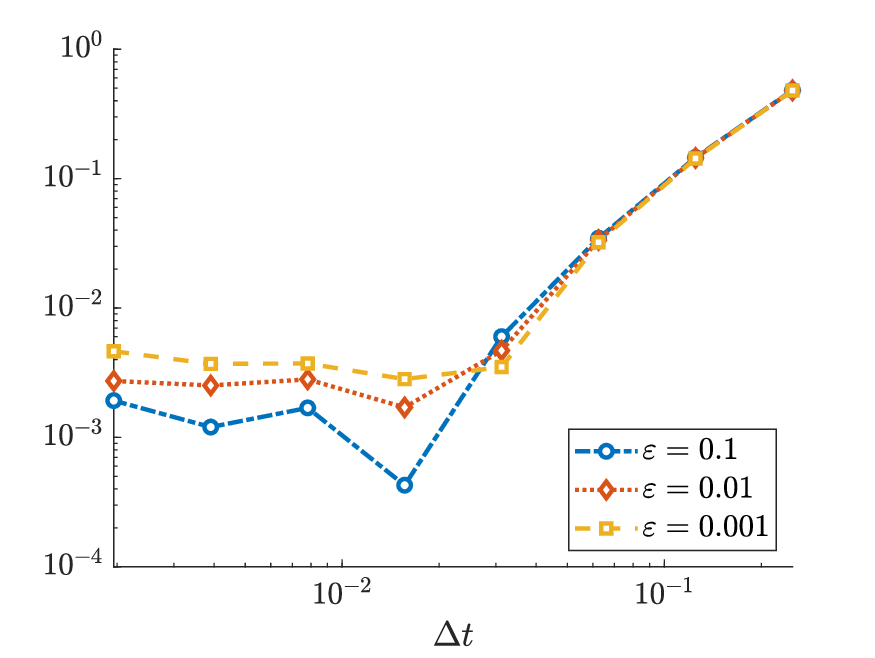}
\caption{Convergence test with respect to $\Delta t$ using $T = 2^{11}$.}
\label{fig: convergence_test_double_delta_t}
\end{subfigure}
\caption{In Example \ref{example: convergence_test_double}, we plot the error $| \hlambda^{\varepsilon,\alpha}_{\Delta t} - \lambda^{\varepsilon,\alpha}|$ at $\alpha = 0.25$ against $T$ and $\Delta t$ respectively, using $M = 500\,000$.}
\label{fig: convergence_test_double}
\end{figure}

We use the same numerical setting for the above two examples. We fix $\alpha = 0.25, M = 500\,000$ and choose the initial measure of the particles to be the standard multivariate Gaussian distribution. For the convergence test with respect to $T$, we fix $\Delta t = 2^{-7}$ and choose $T = 2^4, 2^5, \ldots, 2^{12}$ for each $\varepsilon = 0.1, 0.01, 0.001$. The error $| \hlambda^{\varepsilon, \alpha}_{\Delta t} - \lambda^{\varepsilon, \alpha} |$ is shown in Figures \ref{fig: convergence_test_single_T} and \ref{fig: convergence_test_double_T}. To test the effectiveness of the burn-in procedure, we fix $\varepsilon = 0.001, \Delta t = 2^{-7}$, choose $T = 2^4, 2^5, \ldots, 2^{12}$, and run the computation using the burn-in procedure, in which we start computing the eigenvalue from $t = \frac{T}{2}$. We show in Figures \ref{fig: convergence_test_single_T_burn_in} and \ref{fig: convergence_test_double_T_burn_in} the comparison between results obtained using and without using the burn-in procedure. For the convergence test with respect to $\Delta t$, we fix $T = 2048$ and choose $\Delta t = 2^{-2}, 2^{-3}, \ldots, 2^{-9}$ for each $\varepsilon = 0.1, 0.01, 0.001$. The error $| \hlambda^{\varepsilon, \alpha}_{\Delta t} - \lambda^{\varepsilon, \alpha} |$ is shown in Figures \ref{fig: convergence_test_single_delta_t} and \ref{fig: convergence_test_double_delta_t}.

As shown in the results of Examples \ref{example: convergence_test_single}--\ref{example: convergence_test_double}, the convergence rate with respect to $T$ is first order, and the burn-in procedure does help accelerate the computation, especially for small $T$. On the other hand, the convergence with respect to $\Delta t$ is more complicated. We are unable to identify the order of convergence in $\Delta t$, probably due to insufficiently large values of $M$ and $T$, but we are unable to afford a numerical setting with very large $T$ and $M$ due to hardware limitation. Nonetheless, the results suffice to confirm the convergence with respect to $\Delta t$.

In Figures \ref{fig: convergence_test_single_T} and \ref{fig: convergence_test_double_T}, we observe an increase in the error as $\varepsilon$ decreases. Several mechanisms could be at play for this increase: our (uninformed) standard Gaussian initialization of the particle is increasingly far from the invariant measure in Proposition \ref{prop: discrete stability}\footnote{In fact, in the examples considered, one can deduce by a change of variables that this invariant measure is still Gaussian, but with a variance that is rescaled by a factor of $\varepsilon$.}, affecting the multiplicative constant $C_\mu$; Arrhenius' law predicts very slow transitions between relevant critical points, etc.

\section{Conclusions}
\label{sec: conclusion}
We develop an interacting particle method for the computation of rate functions $I^\varepsilon$ for the large deviations of entropy production in the context of diffusion processes by equivalently computing the principal eigenvalue for a family of non-self-adjoint elliptic operators. We are particularly interested in the high-dimensional and vanishing-noise case, which is challenging to traditional numerical methods. We show that the principal eigenvalue can be well approximated in terms of the spectral radius of a discretized semigroup, making it suitable for an IPM. Moreover, we discuss two techniques for setting the initial measure in the IPM for faster computation. We present numerical examples of dimensions up to 16. The numerical results provide evidence that the numerical principal eigenvalue converges within visual tolerance to the analytical vanishing-noise limit with a fixed number of particles and a fixed time step size. Furthermore, the observed asymptotic behavior of the empirical density at the final time in the vanishing-noise limit is consistent with the theory in \cite{fleming1997asymptotics}. Our paper appears to be the first one to obtain numerical results of principal eigenvalue problems in such high dimensions. {Our method also allows us to probe the rate function $I^\varepsilon$ in situations where no explicit formulas are available, as well as to explore the gap between the theoretical works on different scalings for the vanishing-noise limit $\varepsilon \to 0^+$.}

In the future, it would be interesting to systematically investigate the error estimate of the IPM with respect to the numerical parameters of the method. Furthermore, the method should also be used to study the large deviation rate functions in situations that go beyond the scope of the theoretical works \cite{BDG15,BGL22,JPS17,raquepas2020large}, e.g.\ combining non-linearity of the vector field with the degeneracy of the noise. Finally, it should be noted that one could explore the possible benefits of considering higher-order schemes for SDEs (e.g.\ the Milstein method \cite{mil1975approximate} and high-order Runge--Kutta schemes (see e.g. \cite{rossler2009second})) or more sophisticated resampling procedures.

\begin{appendices}

\section{Proof sketches} 
\label{append: proofs}

\begin{proof}[Proof sketch of Proposition \ref{prop: continuous stability}]
We follow \cite[Section 2.3]{ferre2021more}. Picking $\theta$ sufficiently small such that $32\theta < \| H_0 \|^{-2}$ in Assumption \ref{assumption: quadratic growth of potential}, one can show that the growth bounds in Assumptions \ref{assumption: quadratic growth of potential}--\ref{assumption: bounded velocity} imply that $W$ is a Lyapunov function. The regularity properties in Assumptions \ref{assumption: quadratic growth of potential}--\ref{assumption: bounded velocity} can be used to show that the semigroup satisfies a Deoblin-type minorization property, an irreducibility property, and a local regularity property that then suffice to deduce \eqref{eq: convergence of conitinous Feynman--Kac semigroup}.

Since the semigroup is positivity preserving, compact and irreducible, for every fixed $t_0 > 0$, the (positive) spectral radius $\Lambda_{t_0}$ of $P_{t_0}^U$ is a simple eigenvalue, is related to the leading real eigenvalue $\lambda$ of the generator according to $\Lambda_{t_0} = \mathrm{e}^{t_0 \lambda}$, and admits a positive eigenvector $h$ with $\|h\|_{L^\infty_W} = 1$. One can also show that the eigenspace for $\lambda$ is actually 1-dimensional and is the only eigenspace admitting a positive eigenvector; we refer the reader to \cite[\S{V.2,\,VI.3,\,VI.3}]{engel2000one} for more details on the spectral theory of such semigroups (Perron--Frobenius-type theorems (a.k.a.\ Krein--Rutman-type theorems), spectral mapping theorems, etc.) and to \cite[App.\,A]{BDG15} and \cite[App.\,A]{raquepas2020large} for a treatment of the particular semigroups of interest using these tools. Hence, we can take $\varphi = h$ in \eqref{eq: convergence of conitinous Feynman--Kac semigroup} at times of the form $t = kt_0$, deduce that
\[
t_0 \lambda = \lim_{k\to\infty} \frac{1}{k} \log \pair{\mu}{P_{k t_0}^U  \1},
\]
and then pass to \eqref{eq: computation of eigenvalue in contiuous case} using standard arguments.
\end{proof}

\begin{proof}[Proof sketch of Proposition \ref{prop: discrete stability}]
    We follow \cite[Section 2.2]{ferre2021more}. The constant function $\1$ is a Lyapunov function for $\hP_{\Delta t}^U$. To see this, note that the action of the first operator in the splitting leaves $\1$ invariant and that the action of the second operator is such that
    \begin{align}
         \exp(U \Delta t)\1 \leq & \left(\sup_{|y| > R} \exp(\Delta t U(y)) \right)\1 + \left(\sup_{|y| \leq R} \exp(\Delta t U(y))\right) \1_{\{y : |y| \leq R\}},
    \end{align}
    and then take $R \to \infty$ using the growth bounds in Assumptions \ref{assumption: quadratic growth of potential}--\ref{assumption: bounded velocity}.
    The regularity properties in Assumptions \ref{assumption: quadratic growth of potential}--\ref{assumption: bounded velocity} can be used to show that $\hP_{\Delta t}^U$ satisfies a Deoblin-type minorization property, an irreducibility property, and a local regularity property that then suffice to deduce \eqref{eq: stability of Euler scheme} for some uniquely determined probability measure $\hmu_{\Delta t}^{\star}$ satisfying \begin{equation}
    \label{eq:hmu-is-invar}
        \Phi_{1,\Delta t}\hmu_{\Delta t}^{\star} = \hmu_{\Delta t}^{\star}.
    \end{equation}
    Moreover, one can show that the spectral radius $\hLambda_{\Delta t} $ for $\hP_{\Delta t}^U$ admits a positive eigenvector $\widehat{h}$ with $\|\widehat{h}\|_{L^\infty} = 1$, and that no other eigenvalue admits a positive eigenvector. Taking $\varphi = \widehat{h}$ in \eqref{eq: stability of Euler scheme}, one can deduce that
    \[
    \log \hLambda_{\Delta t}
        = \lim \limits_{k \rightarrow \infty} \frac{1}{k} \log \pair{\mu}{(\hP_{\Delta t}^U)^k\1}.
    \]
    Finally, since \eqref{eq:hmu-is-invar} implies in particular that $\pair{\Phi_{1,\Delta t}\hmu_{\Delta t}^{\star}}{\widehat{h}} = \pair{\hmu_{\Delta t}^{\star}}{\widehat{h}}$, it follows from the eigenvalue equation for $\widehat{h}$ and the definition of $\Phi_{1,\Delta t}$ that $\hLambda_{\Delta t} = \pair{\hmu_{U,\Delta t}^\star}{\hP_{\Delta t}^U\1}$.
\end{proof}

\begin{proof}[Proof sketch of Theorem \ref{thm: convergence of IPM wrt time step size}]
    Fix $T > 0$ and set
    \begin{align*}
        \widetilde{\Pi}_n := (\wP^U_{Tn^{-1}})^n
        \text{ and }
        \widehat{\Pi}_n := (\hP^U_{Tn^{-1}})^n.
    \end{align*}
    We show in four steps that the spectral radii of $\widetilde{\Pi}_n$ and $\widehat{\Pi}_n$ both converge to that of $P_T^U$ as operators on $C_0$ equipped with the $L^\infty$-norm.
    \begin{description}
        \item[Step 1.] \emph{Operator norm convergence $\|\widehat{\Pi}_n - \widetilde{\Pi}_n\| \to 0$.} The growth conditions on $V$ in Assumption \ref{assumption: quadratic growth of potential} and the control on $b$ in Assumption \ref{assumption: bounded velocity} imply that $\exp(TU)$ is bounded by $K := \exp(T\|U_+\|_{L^\infty})$
        and satisfies the following decay property: for every $\delta > 0$, there exists $R_\delta$ such that
        \[
            \sup_{|x| > R_\delta} \exp(TU(x)) < \delta.
        \]
        The boundedness in Assumption \ref{assumption: bounded velocity} allows for the application of a classical martingale argument that shows that, for every $\eta > 0$, there exists $\rho_\eta$ such that
        \begin{equation}
        \label{eq:not-past-rho}
            \sup_x \mathbb{P}^x\left\{\sup_{t \in [0,T]} |X_t^x - x| \geq \rho_\eta\right\} < \eta.
        \end{equation}

        One can show that $|\widehat{\Pi}_n\varphi(x) - \widetilde{\Pi}_n\varphi(x)|$ can be made arbitrarily small with large $n$, uniformly in $x$ and $\varphi$ with $\|\varphi\|_{L^\infty} = 1$ as follows. Choose $\delta$ and $\eta$ small enough, then $R_\delta$ and $\rho_\eta$ accordingly, and then consider separately the cases $|x| > R_\delta + \rho_\eta$ and $|x| \leq R_\delta + \rho_\eta$.
        The former will be small as is, and as for the latter, take $n$ large to leverage results for the Euler--Mayurama scheme in total variation norm \cite{BJ22}.

        \item[Step 2.] \emph{Strong convergence $\widetilde{\Pi}_n - P^U_T \stackrel{\text{s}}{\to} 0$.} Since we already know that $\mathcal{L} + U$ generates a strongly continuous semigroup on $C_0$, this is a direct consequence of Trotter's product formula for the semigroups generated by $\mathcal{L}$ and $U$ on that same space \cite{Tr59}.

        \item[Step 3.] \emph{Collective compactness of $(\widetilde{\Pi}_n)_{n=1}^\infty$.} We want to show that
        \[
            S := \{\widetilde{\Pi}_n \varphi : n \in \mathbb{N}, \varphi \in C_0, \|\varphi\|_{L^\infty} \leq 1 \}
        \]
        is precompact in $C_0$. To do this, we need to show three properties: boundedness, uniform vanishing at infinity, and equicontinuity.
        \begin{enumerate}
            \item[3a.] Pointwise, it follows from the definition of $\widetilde{\Pi}_n$ and the assumption that $\|\varphi\|_{L^\infty} \leq 1$ that
            \begin{align}
            \label{eq:Qn-pw-bound}
                |(\widetilde{\Pi}_n\varphi)(x)| \leq \mathbf{E}\Bigg[ \exp \left( \sum_{k=0}^{n-1} \frac{T}{n} U(X_{kTn^{-1}}) \right) \Bigg| X_0 = x \Bigg]
            \end{align}
            This shows, among other things, that the family $S$ is bounded in norm by $K$.

            \item[3b.] Using \eqref{eq:not-past-rho} in conjunction with properties of $U$ discussed in Step 1, we see that the expectation on the right-hand side of \eqref{eq:Qn-pw-bound} is arbitrarily small, simultaneously for all $n$ and $\varphi$, as soon as $x$ is outside of a sufficiently large ball. Hence, the family $S$ does uniformly vanish at infinity.

            \item[3c.] To show equicontinuity, we consider the differences
            \begin{align*}
                \widetilde{\Pi}_n\varphi(x) - \widetilde{\Pi}_n\varphi(y) = & \mathbf{E}^x\Bigg[ \varphi(X_T) \exp \left( \sum_{k=0}^{n-1} \frac{T}{n} U(X_{kTn^{-1}}) \right) \Bigg] \\
                & - \mathbf{E}^y\Bigg[\varphi(Y_T) \exp \left( \sum_{k=0}^{n-1} \frac{T}{n} U(Y_{kTn^{-1}}) \right) \Bigg]
            \end{align*}
            with $y$ close to some fixed $x$\,---\,we require $|y-x| < 1$ to begin. Note that this difference of expectations can be computed by realizing the two processes $(X_t)_{t \in [0,T]}$ and $(Y_t)_{t \in [0,T]}$ on a common probability space as we see fit\,---\,this is the classical coupling method; see e.g. \cite{Th95} and historical references therein.

            By continuity, the difference
            \[
                \delta_2 := |U(x) - U(y)|
            \]
            can be made arbitrarily small by taking $y$ close enough to $x$.
            In view of the ultra-Feller property \cite{Ha09}, given any $\tau > 0$, the difference
            \[
               \delta_3(\tau) := \|P_\tau^*\delta_x- P_\tau^*\delta_y\|_\text{TV}
            \]
            can be made arbitrarily small by taking $y$ close enough to $x$. Hence, once such a $\tau$ is given, we can realize the two processes $(X_t)_{t \in [0,T]}$ and $(Y_t)_{t \in [0,T]}$ on a common probability space so that
            \[
                \mathbb{P}^{(x,y)}\{ X_t \neq Y_t \text{ for some } t \in [\tau, T]\} \leq \delta_3(\tau).
            \]
            All in all, we have
            \begin{align*}
                |\widetilde{\Pi}_n\varphi(x) - \widetilde{\Pi}_n\varphi(y)| \leq &
                \mathbf{E}^{(x,y)}\Bigg| \varphi(X_T) \exp \left( \sum_{k=0}^{n-1} \frac{T}{n} U(X_{kTn^{-1}}) \right) - \varphi(Y_T) \exp \left( \sum_{k=0}^{n-1} \frac{T}{n} U(Y_{kTn^{-1}}) \right) \Bigg| \\
                \leq & 2 K\eta + 2 K\delta_3(\tau)
                + KT\delta_2 + 4K\tau \sup_{|z-x|<1+\rho_\eta} |U(z)|.
            \end{align*}
            This can be made arbitrarily small, uniformly in $\varphi$ and $n$, as follows. First, we choose $\eta$ so that the first term is as small as desired, and then we fix $\rho_\eta$ accordingly. Next, we take $\tau$ small enough so that the last term is as small as desired. Finally, once $\tau$ is fixed, we can choose a coupling to compute the expectation, and the second and third terms will be as small as desired as long as $y$ is close enough to $x$.
        \end{enumerate}

        \item[Step 4.] \emph{Spectral theory.}  On the one hand, Step 1 and the fact that both sequences of operators are uniformly bounded by $K$ ensures that $|\operatorname{spr}(\widetilde{\Pi}_n) - \operatorname{spr}(\widehat{\Pi}_n)| \to 0$ by classical perturbation theory arguments; see e.g. \cite{Kat}. On the other hand, thanks to the spectral analysis of \cite{AP68} for collectively compact sequences of operators that converge strongly, Steps 2 and 3 show that $|\operatorname{spr}(\widetilde{\Pi}_n) - \operatorname{spr}(P_T^U)| \to 0$.
    \end{description}
    Clearly, if $\wLambda_{Tn^{-1}}$ is an eigenvalue of $\wP_{Tn^{-1}}^U$ with a positive eigenvector, then $(\wLambda_{Tn^{-1}})^n$ is an eigenvalue of $\widetilde{\Pi}_n = (\wP_{Tn^{-1}}^U)^n$ with a positive eigenvector. Hence, the identity $\operatorname{spr}(\widetilde{\Pi}_n) = (\wLambda_{Tn^{-1}})^n$ follows from the fact that the spectral radius is the only eigenvalue admitting a positive eigenvector. Similarly, $\operatorname{spr}(\widehat{\Pi}_n) = (\hLambda_{Tn^{-1}})^n$. Finally, the fact that $\operatorname{spr}(P_T^U) = \exp(\lambda T)$ is a consequence of the spectral mapping theorem \cite{engel2000one}, so the proof is completed.
\end{proof}

\end{appendices}

\paragraph*{Acknowledgements}

R.R. was partially funded by the \emph{Fonds de recherche du Qu\'ebec\,---\,Nature et technologies} (FRQNT) and by the Natural Sciences and Engineering Research Council of Canada (NSERC). J.X. was partially supported by NSF grant DMS-2309520. Z.Z. was supported by the National Natural Science Foundation of China (Projects 92470103 and 12171406), Hong Kong RGC grant (Projects 17307921, 17304324, and 17300325), Seed Funding Programme for Basic Research (HKU), the Outstanding Young Researcher Award of HKU (2020--21), Seed Funding for Strategic Interdisciplinary Research Scheme 2021/22 (HKU), and an R\&D Funding Scheme from the HKU-SCF FinTech Academy. The project was initiated at the Courant Institute, New York University, where J.X. was visiting in the Fall of 2022. The authors would like to thank Professors R.\ Caflisch, R.\ Kohn, D.\ McLaughlin, C.\ Peskin, S.\ R.\ S.\ Varadhan, and L.-S.\ Young for helpful conversations and scientific activities that made our collaboration possible. The computations were performed at the research computing facilities provided by Information Technology Services, the University of Hong Kong.

\bibliographystyle{siam}
\bibliography{reference}
\end{document}